\documentclass[12pt]{amsart}
\usepackage{amsmath,amsthm,amsfonts,amssymb,latexsym,enumerate,xcolor,comment}
\usepackage[colorlinks,pagebackref]{hyperref}
\usepackage[english]{babel}

\headheight=7pt
\textheight=574pt
\textwidth=432pt
\topmargin=14pt
\oddsidemargin=18pt
\evensidemargin=18pt

 %% Fonts
 %%%%%%%%%%%%%%%%%%

\newcommand{\ZZ}{{\mathbb{Z}}}

\newcommand{\bC}{{\mathbf C}}

\newcommand{\SSS}{\mathsf{S}}
\newcommand{\AAA}{\mathsf{A}}

 %% Operatoren
 %%%%%%%%%%%%%%%%

\renewcommand{\exp}{{{\operatorname{exp}}}}

\newcommand{\Irr}{{{\operatorname{Irr}}}}

\newcommand{\fpr}{\operatorname{fpr}}
\newcommand{\fix}{\operatorname{fix}}

\newtheorem{thm}{Theorem}[section]
\newtheorem{lem}[thm]{Lemma}

\newtheorem{prop}[thm]{Proposition}

\newtheorem*{conA'}{Conjecture A'}

\theoremstyle{definition}
\newtheorem{rem}[thm]{Remark}
\newtheorem{defn}[thm]{Definition}
\newtheorem{notation}[thm]{Notation}

\numberwithin{equation}{section}

%\marginparsep-0.5cm

%\renewcommand{\thefootnote}{\fnsymbol{footnote}}
%\footnotesep6.5pt

%\newcommand{\tw}[1]{{}^#1\!}

\begin{document}

%%%%%%%%%%%%%%%%%%%%%%%%%%%%%%%%%%%%%%%%%%%%%%%%%%%%%%%%%%%%%
\title[Wreath products and the non-coprime $k(GV)$ problem]{Wreath products and the non-coprime\\ $k(GV)$ problem}
%%%%%%%%%%%%%%%%%%%%%%%%%%%%%%%%%%%%%%%%%%%%%%%%%%%%%%%%%%%%%

\author[N. N. Hung]{Nguyen N. Hung}
\address{Department of Mathematics, The University of Akron, Akron,
OH 44325, USA}
\email{hungnguyen@uakron.edu}

\author[A. Mar\'oti]{Attila Mar\'oti}
\address{Hun-Ren Alfr\'ed R\'enyi Institute of Mathematics, Re\'altanoda Utca 13-15, H-1053, Budapest, Hungary}
\email{maroti@renyi.hu}

\author[J. Mart\'{\i}nez Madrid]{Juan Mart\'{\i}nez Madrid}
\address{Departament de Matem\`atiques, Universitat de Val\`encia, 46100
  Burjassot, Val\`encia, Spain}
\email{Juan.Martinez-Madrid@uv.es}

\thanks{The first author gratefully acknowledges the support of the AMS-Simons Research Enhancement Grant (AWD-000167 AMS)
and the UA Faculty Research Grant (FRG 1747). The second author was
supported by the National Research, Development and Innovation
Office (NKFIH) Grant No.~K138596, No.~K132951 and Grant No.~K138828.
The research of the third author was supported by Ministerio de
Ciencia e Innovaci\'on (Grant PID2022-137612NB-I00 funded by
MCIN/AEI/10.13039/501100011033 and ``ERDF A way of making Europe"),
and by Generalitat Valenciana CIAICO/2021/163 and CIACIF/2021/228.
Finally, we are grateful to the referee for several helpful comments
that significantly improved the exposition of the paper.}

\keywords{Conjugacy classes, class number bound, wreath products, permutation groups, non-coprime $k(GV)$ problem}

\subjclass[2020]{Primary 20B05, 20E45.}

%\date{\today}

\begin{abstract}
Let $G = X \wr H$ be the wreath product of a nontrivial finite group
$X$ with $k$ conjugacy classes and a transitive permutation group
$H$ of degree $n$ acting on the set of $n$ direct factors of $X^n$.
If $H$ is semiprimitive, then $k(G) \leq k^n$ for every sufficiently
large $n$ or $k$. This result solves a case of the non-coprime
$k(GV)$ problem and provides an affirmative answer to a question of
Garzoni and Gill for semiprimitive permutation groups. The proof
does not require the classification of finite simple groups.
\end{abstract}
\maketitle

%\pagestyle{myheadings}
%\markboth{for personal use only}{preliminary}

%%%%%%%%%%%%%%%%%%%%%%%%%%%%%%%%%%%%%%%%%%%%%%%%%%%%%%%%%%%%%%%%%%%%%%%%%

\section{Introduction}

Let $G$ be a finite group. Let $k(G)$ be the number of conjugacy
classes of $G$. This is equal to the number of complex irreducible
representations of $G$. Bounding $k(G)$ is a classical problem with
numerous applications in both group theory and representation
theory. There are many results providing upper bounds for $k(G)$.
The most notable one is the $k(GV)$ theorem, which states that
$k(GV)\leq |V|$, where $V$ is an elementary abelian group which is a finite and faithful $G$-module for a
finite group $G$ of order coprime to $|V|$ (see \cite{S}).

In \cite[Problem~1.1]{GT}, Guralnick and Tiep put forward the
non-coprime $k(GV)$ problem:  without assuming the coprime
condition, can one show that $k(GV)\leq|V|$? More precisely, can one
characterize all finite groups $GV$ such that $k(GV) > |V|$? There
are many works on this problem. See \cite{GT}, \cite{Keller1},
\cite[Chapter 13]{S}, \cite{Keller2}, \cite{GuMa}, \cite{Nav},
\cite{FuGu}.

In case $V$ is a (finite, faithful and) irreducible $G$-module for a
finite group $G$ (not necessarily of coprime order to $|V|$) the
semidirect product $GV$ is an affine primitive permutation group
with socle $V$ and degree $|V|$. On the other hand, when $H$ is a
primitive permutation group with non-abelian socle and of degree
$n$, Garzoni and Gill \cite{GG} proved that either $k(H) < n/2$ and
$k(H) = o(n)$ as $n \to \infty$, or $H$ belongs to explicit families
of examples.

In this paper we are interested in bounding $k(G)$ for $G = X \wr
H$, the wreath product of a finite group $X$ and a permutation group
$H$ of degree $n$ acting on the set of direct factors of $X^n$. This
is a case of the non-coprime $k(GV)$ problem when $X$ is an
elementary abelian group or, more generally, when $X =KW$ for some
finite $K$-module $W$ of a finite group $K$. The problem of bounding
$k(X \wr H)$ is also related to \cite{GG} as described below.

Let $k:=k(X)$. Schmid \cite[Proposition 8.5d]{S} proved that if $H$
is a cyclic group of order $n$ (acting regularly), then $k(G) =
(k^{n}-k)/n +kn$ when $n$ is a prime and $k(G) \leq k^{n}-k+kn$ in
general. More recently, Garzoni and Gill \cite[Lemma 4.3]{GG} showed
that if $H$ is a regular permutation group, then $k(G) =
\frac{k^n}{n} + O(n k^{n/2})$. Moreover, they asked \cite[Question
2]{GG} whether $k(G) = O(k^{n})$ for any transitive permutation
group $H$ of degree $n$.

If $n$ is even, $X = C_2$, $k = 2$ and $H = {(C_2)}^{n/2}$ such that
$G = X \wr H$ is the direct product of $n/2$ copies of $C_{2} \wr
C_{2}$, then $k(G) = 5^{n/2} > k^{n}$. Furthermore, if $H =
{(C_2)}^{n/2}$ is replaced by $H = C_{2} \wr C_{n/2}$, then $H$ acts
transitively on the set of factors of $X^{n}$ and $k(G) \geq
5^{n/2}/(n/2) > k^{n}$ for every sufficiently large $n$. This
answers \cite[Question 2]{GG} in the negative (for $k=2$).

On the other hand, we provide an affirmative answer to the
Garzoni-Gill question in the case where $H$ is a primitive
permutation group. In fact, our result extends to the broader class
of transitive groups called \emph{semiprimitive} permutation groups,
defined as those permutation groups in which every normal subgroup
is transitive or semiregular. (A permutation group is called {\it
semiregular} if the stabilizer of any point is trivial.)

\begin{thm}
\label{t2} Let $G = X \wr H$ where $X$ is a nontrivial finite group
with $k$ conjugacy classes and $H$ is a transitive permutation group
of degree $n$ acting on the set of $n$ direct factors of $X^n$. If
$H$ is semiprimitive, then $k(G) \leq k^n$ for every sufficiently
large $n$ or $k$.
\end{thm}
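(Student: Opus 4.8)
The plan is to pass from $k(X \wr H)$ to a group-theoretic quantity attached to $H$ alone, and then to reduce the whole theorem to the single binary inequality $k(C_2 \wr H) \le 2^n$.

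The cornerstone is an exact counting formula. Writing $\Omega = \{1,\dots,n\}$ and, for $K \le H$, letting $o(K)$ denote the number of orbits of $K$ on $\Omega$, the standard description of conjugacy classes of $G = X^n \rtimes H$ (a class of $H$ together with a $C_H(h)$-orbit of assignments of $X$-classes to the cycles of $h$) combines with Burnside's lemma to give
\[
k(X \wr H) = \frac{1}{|H|} \sum_{\substack{h,c \in H \\ hc = ch}} k^{\,o(\langle h,c \rangle)} = \frac{1}{|H|} \sum_{\phi : \Omega \to \{1,\dots,k\}} k(H_\phi)\,|H_\phi|,
\]
where $H_\phi$ is the stabilizer of $\phi$ in $H$ and one uses that $o(\langle h,c\rangle)$ equals the number of $\langle h,c\rangle$-orbits on $\Omega$. (One checks the identity against Schmid's regular-cyclic values and against $X \wr S_2$.) I would record this as a lemma; the second form makes the dominant contribution transparent, coming from $\phi$ with trivial stabilizer and yielding the main term $k^n/|H|$.

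Next I would reduce to $k = 2$. The inequality $k(X \wr H) \le k^n$ is equivalent, after isolating the identity pair, to $R(k) := \sum_{(h,c) \neq (e,e),\, hc = ch} k^{\,o(\langle h,c \rangle) - n} \le |H| - 1$. Since $H$ is transitive and faithful, every nontrivial $\langle h,c\rangle$ moves some point, so each exponent $o(\langle h,c\rangle) - n$ is at most $-1$; hence $R(k)$ is nonincreasing in $k$ and $R(k) \le R(2)$ for all $k \ge 2$. Therefore it suffices to prove $R(2) \le |H| - 1$, i.e. $k(C_2 \wr H) \le 2^n$, which is exactly the non-coprime $k(GV)$ bound for the $\FF_2$-permutation module of $H$. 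This single statement, established for all semiprimitive $H$ of degree $n \ge N_0$, yields the bound for every $k \ge 2$ and large $n$; the finitely many small-degree cases are then absorbed into the ``large $k$'' alternative, since $R(k) \to 0$ as $k \to \infty$ for each fixed $H$.

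It remains to prove $k(C_2 \wr H) = \sum_{[T]} k(H_T) \le 2^n$, the sum running over $H$-orbits of subsets $T \subseteq \Omega$ with $H_T$ the setwise stabilizer, for semiprimitive $H$ of large degree. Here the naive bound $k(H_T) \le |H_T|$ is hopeless (it already fails for $H = S_n$), so one must genuinely exploit that \emph{class numbers} of the stabilizers are small. I would use the defining dichotomy: if $H$ has a nontrivial semiregular normal subgroup $K$, then $K$ is fixed-point-free with only $n/|K|$ orbits, forcing large values of $n - o$ on a coset structure and rendering the corresponding contributions negligible; otherwise the transitive normal subgroups push one, via the structure theory of semiprimitive groups, to primitive constituents. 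For primitive constituents one splits into (a) the groups containing $A_m$ on a constituent, where the relevant stabilizers are (products of) symmetric groups and the subexponential growth $p(m) \le e^{c\sqrt m}$ of the partition function keeps $\sum_{[T]} k(H_T)$ well below $2^n$; and (b) the remaining primitive groups, where the classification-free bounds of Babai (minimal degree $\gg \sqrt n$ and order $\le \exp(o(n))$) make the averaged all-pairs estimate $\frac{1}{|H|}\sum_{a,b} 2^{\,o(\langle a,b\rangle)} \le 2^n$ succeed. The main obstacle is precisely this last step: obtaining these order and minimal-degree (or fixity) estimates \emph{uniformly} across all semiprimitive $H$ of degree $n$ and \emph{without} the classification of finite simple groups, and propagating them correctly from the primitive constituents through the semiprimitive structure; the symmetric-type case, handled by partition asymptotics rather than by minimal degree, is what makes the class-number bookkeeping unavoidable.
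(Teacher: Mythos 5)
Your two structural moves are sound, and one of them is genuinely nice. The commuting-pair formula $k(X\wr H)=\frac{1}{|H|}\sum_{hc=ch}k^{o(\langle h,c\rangle)}$ is correct and is exactly equivalent to the paper's Lemma~\ref{l1} combined with Lemma~\ref{l2} (Clifford theory for $X^n\lhd G$ plus Burnside); and the monotonicity reduction to $k=2$ is an observation the paper does not make: since $H$ is faithful, every nontrivial $\langle h,c\rangle$ has at most $n-1$ orbits, so $R(k)\le R(2)$, and the full theorem would indeed follow from $k(C_2\wr H)\le 2^n$ for all semiprimitive $H$ of sufficiently large degree, with the bounded-degree cases absorbed via $R(k)\le|H|^2/k\to 0$. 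This reduction would let one do all the hard analysis at $k=2$ only.

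The gap is that the proposal then only sketches that $k=2$ statement, and the tools you name would fail at both of its pressure points. (i) For the ``remaining primitive groups'' you invoke Babai's CFSG-free bounds, but they are quantitatively insufficient: the averaged all-pairs estimate (the paper's Theorem~\ref{t3}) needs, at $k=2$, roughly $(\log_2|H|)^2\lesssim n/4$, i.e. $|H|\le 2^{O(\sqrt n)}$, whereas Babai's bound for uniprimitive groups is only $|H|\le\exp(O(\sqrt n\log^2 n))$ --- too large by polylogarithmic factors in the exponent --- and it says nothing about $2$-transitive groups at all. This is precisely why the paper uses the Sun--Wilmes bound $\exp(O(n^{1/3}\log^{7/3}n))$ \cite{SW}, whose exceptional families are exactly the large base groups. (ii) For those exceptional families your case (a) rests on a false description of the stabilizers: only in the \emph{natural} actions of $\AAA_m,\SSS_m$ (Section~\ref{sec:4} of the paper) are the subset stabilizers Young subgroups, where partition asymptotics suffice. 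For $\SSS_m$ on $\ell$-subsets and for the product actions $\SSS_m\wr\SSS_t$, the stabilizer $H_T$ of $T\subseteq\Omega$ is the stabilizer of an arbitrary family of $\ell$-sets, and partition growth gives no control of $\sum_{[T]}k(H_T)$; instead one bounds each $k(H_T)$ crudely by Kov\'acs--Robinson \cite{KR} (at most $5^{mt}$, embedding into $\SSS_{mt}$) and then must bound the number of $H$-orbits on $2^\Omega$, which is the genuinely hard step --- the paper's Proposition~\ref{prop:11}, resting on the fixed-point estimate of Lemma~\ref{lem:100} and the Stirling-number estimate of Lemma~\ref{lem:101}. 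Nothing in your plan substitutes for this. (iii) The passage from the primitive quotient back to the semiprimitive group --- splitting the count over the semiregular kernel $K$ and $H\setminus K$, controlling the inertia subgroups meeting $K$ nontrivially, and re-running the large-base machinery for $H/K$ --- is only gestured at; it occupies the paper's entire Section~\ref{sec:semi} and is not routine. In short: correct skeleton, a reduction to $k=2$ that would genuinely streamline the bookkeeping, but the core of the theorem (which you yourself flag as ``the main obstacle'') is missing, and the specific classification-free inputs you propose for it do not suffice.
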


Our proof of Theorem \ref{t2} does not use the classification of
finite simple groups.

In several cases we obtain an asymptotic formula for $k(G)$. For
example, when $H$ is an arbitrary transitive group of order at most
$2^{\sqrt{n}/4}$ (see Theorem \ref{t3}) or when $H$ is primitive
with known exceptions (see Theorem \ref{t1}), then $k(G) = (1+o(1))
(k^{n} / |H|)$ as $n \to \infty$ or $k \to \infty$. It is not true
in general that $k(G) = O(k^{n}/|H|)$; for instance, when $n \to
\infty$ and $H=\SSS_n$, we have $k^n/|H|<1$, yet $k(G)>k(H)$ remains
large. On the other hand, $k(G) \geq k^{n}/|H|$ even for an
arbitrary permutation group $H$ (see Lemma~\ref{l1}).

An important class of primitive permutation groups relevant to our
proof is the so-called \emph{large base} groups (see
Definition~\ref{def}). In Section~\ref{sec:3}, we establish the
asymptotic formula for the case where $H$ is not one of these large
base groups. The specific cases $H\in\{\AAA_n,\SSS_n\}$ are
addressed in Section~\ref{sec:4}, while the remaining large base
groups are treated in Sections~\ref{sec:bounding-orbit} and
\ref{sec:6} using a different approach. This completes the proof of
Theorem~\ref{t2} for primitive $H$. Finally, in
Section~\ref{sec:semi}, we extend the machinery developed in the
earlier sections to prove the result for all semiprimitive groups.

%%%%%%%%%%%%%%%%%%%%%%%%%%%%%%%%%%%%%%%%%%%%%%
%%%%%%%%%%%%%%%%%%%%%%%%%%%%%%%%%%%%%%%%%%%%%%

\section{An asymptotic formula for $k(G)$}
\label{sec:3}

Let $G$, $X$, $H$, $k$, and $n$ be as in the statement of Theorem
\ref{t2}. For the moment assume that $H$ is an arbitrary permutation
group of degree $n$. This group $H$ has a natural action on
$\Irr(X^n)=\Irr(X)^n$, the set of complex irreducible characters of
$X^{n}$. Let $\chi_{1}, \ldots , \chi_{f}$ be a list of
representatives of the distinct orbits of $H$ on
$\mathrm{Irr}(X^{n})$. Let $I_{H}(\chi)$ denote the inertia group in
$H$ of a character $\chi$ in $\Irr(X^{n})$.

\begin{lem}
\label{l1}
We have $k(G) = \sum_{i=1}^{f} k(I_{H}(\chi_{i}))$.
\end{lem}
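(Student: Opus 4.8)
The plan is to compute $k(G) = |\Irr(G)|$ by Clifford theory relative to the base group $N := X^n$, which is normal in $G$ with $G/N \cong H$. First I would observe that the conjugation action of $N$ on $\Irr(N)$ is trivial, since irreducible characters are class functions; hence the action of $G$ on $\Irr(N) = \Irr(X)^n$ factors through $G/N \cong H$, and the $G$-orbits on $\Irr(N)$ coincide with the $H$-orbits, with representatives $\chi_1, \dots, \chi_f$. Clifford's theorem partitions $\Irr(G)$ into the fibers $\Irr(G \mid \chi_i)$ of characters lying over each $\chi_i$, so that $k(G) = \sum_{i=1}^f |\Irr(G \mid \chi_i)|$. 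The Clifford correspondence then identifies $|\Irr(G \mid \chi_i)|$ with $|\Irr(T_i \mid \chi_i)|$, where $T_i := I_G(\chi_i)$ is the inertia group in $G$; and because $N$ is contained in every inertia group, $T_i / N \cong I_H(\chi_i)$.

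The heart of the argument is to show that each $\chi_i$ extends to an irreducible character of its inertia group $T_i$. Writing $\chi_i = \theta_1 \otimes \cdots \otimes \theta_n$ and grouping the coordinates $\{1, \dots, n\}$ into blocks $\Omega_1, \dots, \Omega_r$ according to the value of $\theta_j \in \Irr(X)$, the inertia subgroup $I_H(\chi_i)$ is exactly the subgroup of $H$ preserving this labeling; since the labels are distinct across blocks, every element of $I_H(\chi_i)$ stabilizes each $\Omega_s$ setwise, so $I_H(\chi_i) \leq \prod_s \Sym(\Omega_s)$. On a single block carrying a fixed $\theta \in \Irr(X)$ afforded by a module $V$, I would use the canonical wreath-product extension: let $X^m$ act on $V^{\otimes m}$ through $\theta^{\otimes m}$ and let permutations permute the tensor factors. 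This defines a representation of $X \wr \Sym(\Omega_s)$ whose restriction to $X^{\Omega_s}$ is $\theta^{\otimes m}$. Tensoring these block extensions together and restricting to $X^n \rtimes I_H(\chi_i) = T_i$ produces an extension $\widehat{\chi_i} \in \Irr(T_i)$ of $\chi_i$.

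Given the extension, Gallagher's theorem supplies a bijection $\beta \mapsto \widehat{\chi_i}\,\beta$ between $\Irr(T_i/N)$ and $\Irr(T_i \mid \chi_i)$, whence $|\Irr(T_i \mid \chi_i)| = |\Irr(T_i/N)| = |\Irr(I_H(\chi_i))| = k(I_H(\chi_i))$. Summing over the orbit representatives gives $k(G) = \sum_{i=1}^f k(I_H(\chi_i))$, as claimed. The total is independent of the chosen representatives, since characters in the same $H$-orbit have conjugate, hence isomorphic, inertia groups.

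The main obstacle is the extension step of the second paragraph: one must verify both that $I_H(\chi_i)$ genuinely respects the block decomposition and that the block-wise tensor constructions assemble into a single well-defined representation of $T_i$ extending $\chi_i$. This is precisely the point at which the wreath product structure is indispensable, and it is what allows us to avoid any cohomological obstruction to extension and to invoke Gallagher's theorem cleanly.
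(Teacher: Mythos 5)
Your proposal is correct and follows essentially the same route as the paper: extend $\chi_i$ to its inertia group $I_G(\chi_i)=X\wr I_H(\chi_i)$, then apply Gallagher's theorem and the Clifford correspondence. The only difference is that the paper cites James--Kerber for the extension step, whereas you construct it explicitly via the block decomposition and the tensor-product action permuting factors --- which is precisely the construction behind that citation.
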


\begin{proof}
Fix an index $i$. The character $\chi_i$ of $X^{n}$ may be extended
to its inertia group $I_G(\chi_i)=X \wr I_{H}(\chi_{i})$ in $G$ by
\cite[p. 154]{JK}. Thus the number of irreducible characters of
$I_G(\chi_i)$ lying above $\chi_i$ is $k(I_{H}(\chi_{1}))$, by
Gallagher's theorem \cite[Corollary 6.17]{I}. The identity now
follows from Clifford's correspondence \cite[Theorem 6.11]{I}.
\end{proof}

For an element $h$ of $H \leq \SSS_n$, let $\sigma(h)$ be the number
of cycles in the disjoint cycle decomposition of $h$. For a finite
group $L$ acting on a finite set $\Omega$, we denote the number of
orbits of $L$ on $\Omega$ by $n(L, \Omega)$.

\begin{lem}
\label{l2}
We have $$n(H, \Irr(X^{n})) = \frac{1}{|H|}\sum_{h\in H}k^{\sigma(h)}.$$
\end{lem}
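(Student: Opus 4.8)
The plan is to apply the Cauchy--Frobenius (Burnside) orbit-counting lemma to the action of $H$ on $\Irr(X^{n})=\Irr(X)^{n}$. Under the identification $\Irr(X^{n})=\Irr(X)^{n}$, the group $H\leq\SSS_n$ acts by permuting the $n$ coordinates according to its permutation action on the direct factors of $X^{n}$: for $h\in H$ and a tuple $(\psi_{1},\ldots,\psi_{n})\in\Irr(X)^{n}$, the element $h$ sends it to the tuple whose $i$-th entry is $\psi_{h^{-1}(i)}$. Burnside's lemma then gives
$$n(H,\Irr(X^{n}))=\frac{1}{|H|}\sum_{h\in H}|\fix(h)|,$$
where $\fix(h)$ denotes the set of elements of $\Irr(X)^{n}$ fixed by $h$. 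So the entire problem reduces to computing $|\fix(h)|$ for a fixed $h\in H$.

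The key step is the fixed-point count. I would argue that a tuple $(\psi_{1},\ldots,\psi_{n})$ is fixed by $h$ precisely when $\psi_{i}=\psi_{h(i)}$ for all $i$, which is equivalent to requiring the tuple to be constant on each cycle of the disjoint cycle decomposition of $h$. A fixed tuple is therefore determined by an arbitrary and independent choice of one element of $\Irr(X)$ for each cycle of $h$. Since $|\Irr(X)|=k(X)=k$ and $h$ has exactly $\sigma(h)$ cycles, this yields $|\fix(h)|=k^{\sigma(h)}$.

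Substituting this count into Burnside's formula gives
$$n(H,\Irr(X^{n}))=\frac{1}{|H|}\sum_{h\in H}k^{\sigma(h)},$$
which is the desired identity. This argument is essentially routine, and I do not anticipate a genuine obstacle; the only point requiring a little care is to verify that the stated coordinate action of $H$ on $\Irr(X^{n})$ is indeed the one induced by the action of $H$ on the direct factors of $X^{n}$, so that the cycle structure $\sigma(h)$ of $h$ as a permutation in $\SSS_n$ correctly governs the number of free coordinate-choices. Once this correspondence is fixed, the fixed-point computation and the appeal to Burnside's lemma complete the proof.
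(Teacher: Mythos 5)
Your proof is correct and follows exactly the paper's argument: the paper likewise notes that the number of characters in $\Irr(X^n)$ fixed by $h$ is $k^{\sigma(h)}$ (since a fixed tuple must be constant on each cycle of $h$) and then invokes the orbit-counting lemma. Your version simply spells out the fixed-point count in more detail; there is no gap and no difference in method.
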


\begin{proof}
Let $h$ be an arbitrary element of $H$. The number of characters in
$\Irr(X^n)$ fixed by $h$ is $k^{\sigma(h)}$. The statement follows
from the orbit-counting lemma.
\end{proof}

Let $\Delta$ denote the union of all non-regular orbits of $H$
acting on $\Irr(X^{n})$. By the orbit-counting lemma, the number of
non-regular orbits is
\[
\frac{1}{|H|}\sum_{h\in H}
|\fix(h,\Delta)|=\frac{|\Delta|}{|H|}+\frac{1}{|H|}\sum_{h\in
H\backslash \{1\}} |\fix(h,\Delta)|,
\]
where $\fix(h,\Delta)$ is the set of the fixed points of $h$ on
$\Delta$. Let \[\alpha(H) := \max_{1 \not= h \in H} \sigma(h)/n.\]
Then we get
$$|\Delta|\leq\sum_{h\in H\setminus \{1\}}k^{\alpha(H) n}\leq
(|H|-1)k^{\alpha(H) n}.$$ Therefore, the number of non-regular
orbits is at most $$\frac{(|H|-1)k^{\alpha(H) n} +
(|H|-1)k^{\alpha(H) n}}{|H|} < 2 k^{\alpha(H) n}.$$

Now, up to reordering, let $\chi_1,...,\chi_t$ be representatives of
the non-regular orbits. In particular, $t\leq 2 k^{\alpha(H) n}$.
Moreover, by Lemma~\ref{l1}, we obtain
\[
k(G)=\frac{k^n-|\Delta|}{|H|}+\sum_{i=1}^tk(I_H(\chi_i)).
\]
It immediately follows that
\begin{equation}
\label{ee11} k(G) < \frac{k^{n}}{|H|} + 2e k^{\alpha(H) n},
\end{equation}
where $e$ denotes the maximum of $k(T)$ over all the subgroups $T$
of $H$.

If $H$ is regular (or more generally semiregular), then $\alpha(H)
\leq 1/2$, $e \leq n$ and so $k(G) = \frac{k^{n}}{n} + O(n k^{n/2})$
by (\ref{ee11}). This is the bound obtained by Garzoni and Gill
mentioned above.

For any permutation group $H$, as $e \leq 5^{n/3}$ by \cite[Theorem
1.1]{GM}, we have $k(G) = O(k^{n})$, provided that $5^{1/3} <
k^{1-\alpha(H)}$, again by (\ref{ee11}).

\begin{lem}\label{eee111}
We have
\[
k(G) < \Big(1 + \frac{1}{kn}\Big) \frac{k^{n}}{|H|}
\]
if one of the following conditions holds:
\begin{enumerate}
\item $\alpha(H) n \leq n - \log_{k}(2kn|H|^{2})$;

\item $n$ is bounded, $k \to \infty$, and $H$ does not contain a
transposition.
\end{enumerate}
\end{lem}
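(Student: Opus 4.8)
The plan is to read both statements off the strict inequality \eqref{ee11}, which asserts $k(G) < \frac{k^{n}}{|H|} + 2e\,k^{\alpha(H)n}$, where $e = \max_{T\le H}k(T)$. Since
\[
\Big(1+\frac{1}{kn}\Big)\frac{k^{n}}{|H|} = \frac{k^{n}}{|H|} + \frac{k^{n-1}}{n|H|},
\]
it suffices in each case to control the error term, namely to show
\[
2e\,k^{\alpha(H)n} \le \frac{k^{n-1}}{n|H|}, \qquad\text{equivalently}\qquad 2\,e\,n\,|H| \le k^{\,n-1-\alpha(H)n};
\]
because \eqref{ee11} is a strict inequality, the non-strict bound on the error term already yields the desired strict bound on $k(G)$. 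First I would record the crude estimate $e \le |H|$: indeed $e = \max_{T\le H}k(T)$ and $k(T)\le |T|\le |H|$ for every subgroup $T\le H$. With this in hand it is enough to verify $2n|H|^{2} \le k^{\,n-1-\alpha(H)n}$.

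For part (i) the argument is a pure rearrangement of the hypothesis. Since $k>1$, the inequality $\alpha(H)n \le n - \log_{k}(2kn|H|^{2})$ is equivalent, after applying $\log_k$ monotonicity, to $k^{\,n-\alpha(H)n} \ge 2kn|H|^{2}$; dividing by $k$ gives exactly $k^{\,n-1-\alpha(H)n} \ge 2n|H|^{2}$. Combined with $e\le |H|$ this is precisely the error-term bound above, and feeding it into \eqref{ee11} proves the claim.

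For part (ii) the substantive input is a cycle-counting bound on $\alpha(H)$ coming from the assumption that $H$ contains no transposition. For any $h\in\SSS_n$ with cycle lengths $\ell_i$ one has $n-\sigma(h) = \sum_i(\ell_i-1)$, and this quantity equals $1$ exactly when $h$ is a transposition; hence every $1\ne h\in H$ satisfies $\sigma(h)\le n-2$, so $\alpha(H)\le 1-2/n$ and therefore $n-1-\alpha(H)n \ge 1$. Since $k\ge 2$, this forces $k^{\,n-1-\alpha(H)n}\ge k$. As $n$ is bounded, $H$ ranges over only finitely many groups, so $2n|H|^{2}$ is bounded independently of $k$; thus for all sufficiently large $k$ we have $2n|H|^{2}\le k \le k^{\,n-1-\alpha(H)n}$, and the conclusion again follows from \eqref{ee11}.

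The calculations throughout are routine, and the one genuinely decisive step is the cycle-counting observation in part (ii): forbidding a transposition is exactly what guarantees $\sigma(h)\le n-2$ for every nontrivial $h$, which turns the regime $k\to\infty$ into the positive gap $n-1-\alpha(H)n\ge 1$ in the exponent. Beyond \eqref{ee11} and the trivial bound $k(T)\le|T|$, no further ingredients are required.
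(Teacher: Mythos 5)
Your proof is correct and follows essentially the same route as the paper: both arguments read the lemma off inequality \eqref{ee11} using the crude bound $e\le|H|$, handle case (i) by rearranging the hypothesis into $k^{\,n-1-\alpha(H)n}\ge 2n|H|^{2}$, and handle case (ii) by noting that the absence of transpositions forces $\sigma(h)\le n-2$ for all $1\ne h\in H$, hence $\alpha(H)n\le n-2$, which reduces to the same estimate once $k$ is large relative to the bounded quantity $2n|H|^{2}$. The only difference is cosmetic: you spell out the cycle-counting justification that the paper leaves implicit, and you verify the error-term bound directly rather than folding case (ii) back into the hypothesis of case (i).
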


\begin{proof}
In the first case, we have $(1 - \alpha(H))n \geq
\log_{k}(2kn|H|^{2})$ and so
$$2kne \leq 2kn|H| \leq \frac{k^{(1-\alpha(H))n}}{|H|}.$$ This and (\ref{ee11})
imply the desired inequality. If the second condition holds then
$\alpha(H)n \leq n-2 \leq n - \log_{k}(2kn|H|^{2})$, and the lemma
follows as well. We note that if a primitive permutation group of
degree $n$ contains a transposition then it is $\SSS_{n}$.
\end{proof}

Let $H$ be a transitive permutation group of degree $n$. Let
$\mu(H)$ be the \emph{minimal degree} of $H$. This is the minimal
number of points moved by any nonidentity element of $H$. Let $b(H)$
be the \emph{minimal base size} of $H$. This is the smallest number
of points whose joint stabilizer in $H$ is the identity. We have
$\mu(H) b(H) \geq n$ by \cite[p. 80]{DixMor}. Since $b(H) \leq
\log_{2}|H|$, we obtain $\mu(H) \geq n/ \log_{2}|H|$.

For $h\in H$, let $\fix(h)$ denote the set of fixed points of $h$
and let $\fpr(h) := |\fix(h)|/n$ be the \emph{fixed point ratio} of
$h$. It follows that
\begin{equation}
\label{f1}
\fpr(h) \leq 1 - \frac{1}{\log_{2}|H|}
\end{equation}
for any nonidentity element $h$ in $H$. We have
\begin{equation}
    \label{e3}
    \sigma(h)\leq |\fix(h)|+\frac{n-|\fix(h)|}{2}= \frac{n+|\fix(h)|}{2} = \frac{n}{2}\left(1+ \fpr(h)\right)
\end{equation}
for every $h \in H$. We get $$\alpha(H)n \leq n - \frac{n}{2\log_{2}|H|}$$
by (\ref{f1}) and (\ref{e3}).

The following theorem provides an affirmative answer to
\cite[Question 2]{GG} when $H$ has small order.

\begin{thm}
\label{t3}
If $|H| \leq 2^{\sqrt{n}/4}$, then $k(G) < (1 + \frac{1}{kn}) (k^{n}/|H|)$.
\end{thm}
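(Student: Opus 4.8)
The plan is to verify the first hypothesis of Lemma~\ref{eee111}, namely that $\alpha(H) n \leq n - \log_{k}(2kn|H|^{2})$, after which the desired bound on $k(G)$ is immediate. The work done just before the statement already gives $\alpha(H) n \leq n - \frac{n}{2\log_{2}|H|}$ via (\ref{f1}) and (\ref{e3}), so it suffices to prove the cleaner inequality
\[
\frac{n}{2\log_{2}|H|} \;\geq\; \log_{k}\bigl(2kn|H|^{2}\bigr).
\]

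First I would control the right-hand side, where the only delicate point is that the base $k$ of the logarithm can be arbitrarily large. To sidestep this, I split off the factor $k$: writing $\log_{k}(2kn|H|^{2}) = 1 + \log_{k}(2n|H|^{2})$ and using $k \geq 2$ (which holds because $X$ is nontrivial, so $k(X) \geq 2$), the remaining term satisfies $\log_{k}(2n|H|^{2}) \leq \log_{2}(2n|H|^{2})$. Hence
\[
\log_{k}\bigl(2kn|H|^{2}\bigr) \;\leq\; 2 + \log_{2} n + 2\log_{2}|H|,
\]
an estimate that no longer involves $k$. Now I would insert the hypothesis $\log_{2}|H| \leq \sqrt{n}/4$: on the left it yields $\frac{n}{2\log_{2}|H|} \geq 2\sqrt{n}$, and on the right it yields the bound $2 + \log_{2} n + \sqrt{n}/2$. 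Thus everything reduces to the elementary inequality $\tfrac{3}{2}\sqrt{n} \geq 2 + \log_{2} n$.

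Finally I would observe that the hypothesis is only satisfiable for fairly large $n$, and this is exactly what closes the argument: since $H$ is transitive of degree $n$ we have $|H| \geq n$, so the assumption $|H| \leq 2^{\sqrt{n}/4}$ forces $\log_{2} n \leq \sqrt{n}/4$, i.e.\ $\sqrt{n} \geq 4\log_{2} n$. Feeding this back gives $\tfrac{3}{2}\sqrt{n} \geq 6\log_{2} n \geq 2 + \log_{2} n$ for all $n \geq 2$ (the degenerate case $n = 1$ being trivial), which completes the verification. The main point to watch is the base-$k$ logarithm: the separation $\log_{k}(2kn|H|^{2}) = 1 + \log_{k}(2n|H|^{2})$ is what removes the dependence on $k$, and without it the crude bound $\log_{2}(2kn|H|^{2})$ would carry an uncontrolled $\log_{2} k$ term. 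Once past that point the proof is a routine chain of inequalities, with the transitivity bound $|H| \geq n$ quietly guaranteeing the range of $n$ in which the final estimate holds.
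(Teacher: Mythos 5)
Your proposal is correct and follows essentially the same route as the paper: both verify condition (i) of Lemma~\ref{eee111} by combining the pre-established bound $\alpha(H)n \leq n - \frac{n}{2\log_2|H|}$ with the hypothesis $|H| \leq 2^{\sqrt{n}/4}$, the fact $k \geq 2$, and transitivity ($|H| \geq n$). The paper merely compresses your explicit chain of estimates into the single observation that $2(\log_2|H|)\log_k(2kn|H|^2) \leq 10(\log_2|H|)^2 < n$.
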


\begin{proof}
Let $H$ be transitive of order at most $2^{\sqrt{n}/4}$. Observe
that if
\begin{equation}
\label{eeee1111}
n - \frac{n}{2\log_{2}|H|} \leq n - \log_{k}\left(2kn|H|^{2}\right),
\end{equation}
then we are done by using Lemma~\ref{eee111}. Inequality
(\ref{eeee1111}) is equivalent to the inequality $n \geq 2
(\log_{2}|H|) (\log_{k}(2kn|H|^{2}))$. Since the right-hand side is
at most $10 (\log_{2}|H|)^{2}$ and $|H| \leq 2^{\sqrt{n}/4}$,
inequality (\ref{eeee1111}) is satisfied, finishing the proof of the
theorem.
\end{proof}

We finish this section with the following.

\begin{thm}
\label{t1}
If $H$ is primitive and not isomorphic to any of the groups
\begin{enumerate}
    \item $\AAA_n$, $\SSS_n$,

    \item $\AAA_m$, $\SSS_m$ acting on the set of $2$-element subsets of $\{ 1, \ldots m \}$ with $n = \binom{m}{2}$,

    \item a group $H$ satisfying $(\AAA_{m})^{2} \leq H \leq \SSS_m \wr \SSS_2$ where $n = m^2$,
\end{enumerate}
then \[ \frac{k^{n}}{|H|} \leq k(G) < \Big(1+ \frac{1}{kn}\Big) \frac{k^n}{|H|}\] for every sufficiently large $n$ or $k$.
\end{thm}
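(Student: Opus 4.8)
The plan is to prove the two inequalities separately. The lower bound is immediate from Lemma~\ref{l1}: every summand $k(I_H(\chi_i))$ is at least $1$, so $k(G)\geq f=n(H,\Irr(X^n))$, and since $H$ acts on the set $\Irr(X^n)$ of size $k^n$ the number of orbits satisfies $f\geq k^n/|H|$; this needs no hypothesis on $H$ and gives $k^n/|H|\leq k(G)$. The upper bound will come from Lemma~\ref{eee111} after splitting into a ``large $k$'' and a ``large $n$'' regime, whose union covers all cases in which $n$ or $k$ is large.

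When $n$ is bounded and $k\to\infty$ I would apply Lemma~\ref{eee111}(ii): a primitive group of degree $n$ containing a transposition equals $\SSS_n$, excluded by family~(i), so any admissible $H$ is transposition-free and part~(ii) applies. The substantial case is $n\to\infty$, where the goal is condition~(i), namely $(1-\alpha(H))n\geq\log_k(2kn|H|^2)$ for every $k\geq 2$. If $|H|\leq 2^{\sqrt n/4}$ this is already supplied by Theorem~\ref{t3}, so I may assume $|H|>2^{\sqrt n/4}$. For large $n$ this threshold exceeds the order bound $n^{1+\log_2 n}$ that holds for primitive groups which are not large base, so the classification behind Definition~\ref{def} forces $H$ into the product-action family $(\AAA_m)^r\trianglelefteq H\leq\SSS_m\wr\SSS_r$ on $\ell$-subsets, with $n=\binom{m}{\ell}^r$.

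For such $H$ the crude estimate $\alpha(H)n\leq n-n/(2\log_2|H|)$ from (\ref{f1}) and (\ref{e3}) is too weak, since $|H|$ is super-polynomial in $n$; instead I would bound $\alpha(H)$ from the explicit cycle structure of the product action. An element supported on a single base factor, such as a transposition in one copy of $\SSS_m$, has minimal degree of order $n^{1-1/(\ell r)}$ on the $\binom{m}{\ell}^r$ tuples, so the deficit $(1-\alpha(H))n$ is at least of this order, while $\log_2|H|$ has order $m\log m$, i.e.\ $n^{1/(\ell r)}\log n$ up to constants. Hence $\log_k(2kn|H|^2)$ has order $n^{1/(\ell r)}\log n/\log k$, and condition~(i) reduces to $n^{\,1-2/(\ell r)}\gtrsim\log n/\log k$. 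For $k\geq 2$ this holds for all large $n$ exactly when $\ell r\geq 3$, and the excluded families (i)--(iii) are precisely the cases $\ell r\leq 2$, where the exponent $1-2/(\ell r)$ is nonpositive and a deficit of order $\sqrt n$ (or $O(1)$) cannot absorb the factor $\log n$ for fixed $k$.

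The main obstacle is this last step. Because the generic order bound is useless for large base groups, one must control $\alpha(H)$ --- equivalently the minimal degree and fixed-point ratios of $\SSS_m\wr\SSS_r$ acting on $\ell$-subsets --- directly and sharply enough to locate the crossover at $\ell r=2$. The borderline families, the two-subset action $(r,\ell)=(1,2)$ and the product action on $m^2$ points $(r,\ell)=(2,1)$, are the delicate cases, since their deficit is exactly of order $\sqrt n$, matching the boundary beyond which the asymptotic formula fails for fixed $k$.
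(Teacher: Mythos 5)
Your lower bound, your treatment of bounded $n$ via Lemma~\ref{eee111}(ii) and the transposition remark, and your use of Theorem~\ref{t3} when $|H|\leq 2^{\sqrt{n}/4}$ all coincide with the paper's proof. The problem is the remaining branch $|H|>2^{\sqrt{n}/4}$. Having reduced (via Mar\'oti's theorem) to a large base group with $\ell r\geq 3$, your argument hinges on the claim that the deficit $(1-\alpha(H))n$ is at least of order $n^{1-1/(\ell r)}$, i.e.\ that \emph{every} nonidentity element of $H\leq \SSS_m\wr\SSS_r$ in the product action moves at least $c\,n^{1-1/(\ell r)}$ points. What you actually exhibit --- a transposition in one coordinate moving that many points --- is only an \emph{upper} bound on the minimal degree; the inequality you need goes the other way and must hold for all elements, including those with nontrivial $\SSS_r$-part and those whose coordinates are far from transpositions. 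That lower bound is true and classical (the minimal degree of $\SSS_m$ on $\ell$-subsets is $2\binom{m-2}{\ell-1}$, and elements with nontrivial top part fix at most $\binom{m}{\ell}^{r-1}$ tuples, as in the proof of Proposition~\ref{prop:13}), but it is nowhere proved in your proposal, and you yourself flag it as ``the main obstacle.'' So as written the proof is incomplete exactly where it departs from the paper. (Your closing concern about the ``borderline families'' $(r,\ell)=(1,2)$ and $(2,1)$ is also moot: these are precisely the excluded families (ii) and (iii), so nothing needs to be proved for them, and your assertion that the asymptotic formula ``fails'' there is itself unestablished --- your computation only shows that this method fails there.)

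The paper never enters this branch, because the branch is empty: by the Sun--Wilmes bound \eqref{f2} (which is classification-free), \emph{every} primitive group of degree $n$ outside families (i)--(iii) satisfies $|H|\leq\exp\left(O(n^{1/3}\log^{7/3}n)\right)<2^{\sqrt{n}/4}$ for all sufficiently large $n$; this includes the large base groups with $\ell r\geq 3$, since for them $\log_2|H|=O(rm\log m)$ while $\sqrt{n}=\binom{m}{\ell}^{r/2}$ grows much faster. Hence Theorem~\ref{t3} alone disposes of the entire large-$n$ case, and no fixed-point-ratio analysis of product actions is needed for Theorem~\ref{t1} at all; the paper develops such an analysis only later (Sections~\ref{sec:bounding-orbit} and~\ref{sec:6}) in order to handle the excluded large base families, where only the weaker conclusion $k(G)\leq k^n$ is claimed. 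One further remark: quoting Mar\'oti's bound $n^{1+\log_2 n}$ imports the classification of finite simple groups, which the paper deliberately avoids; the same reduction (large order forces large base) follows CFSG-free from \eqref{f2}, so this defect of your argument is cosmetic and easily repaired.
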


\begin{proof}
The lower bound follows from Lemma \ref{l1}. Without the
classification of finite simple groups Sun and Wilmes
\cite[Corollary 1.6]{SW} proved that
\begin{equation}
    \label{f2}
    |H| \leq \exp \left(O(n^{1/3} \log^{7/3}n )\right),
\end{equation}
unless $H$ is a family of primitive groups appearing in (i), (ii) or
(iii) of the statement of the theorem. Since the right-hand side of
(\ref{f2}) is less than $2^{\sqrt{n}/4}$ for every sufficiently
large $n$, the result follows from Theorem \ref{t3} for every
sufficiently large $n$. When $n$ is bounded and $k \to \infty$, the
statement follows from the paragraph after (\ref{eee111}).
\end{proof}

%%%%%%%%%%%%%%%%%%%%%%%%%%%%%%%%%
%%%%%%%%%%%%%%%%%%%%%%%%%%%%%%%%%

\section{Bounding $k(X\wr \SSS_n)$ and $k(X\wr\AAA_n)$}
\label{sec:4}

The goal of this section is to prove the main result in the case
$H\in\{\SSS_n,\AAA_n\}$. This is the first exception singled out in
Theorem~\ref{t1}.

We remark that although the main theorem of \cite{GM} depends on the
classification of finite simple groups, a part of this result does
not: if $Y$ is a Young subgroup of $\SSS_n$ then both $k(Y)$ and
$k(Y \cap \AAA_n)$ are at most $5^{n/3}$. This latter statement is
used in the two proofs of Theorem \ref{thm:first-exception}
presented below.

\begin{thm}
\label{thm:first-exception} Assume the hypothesis and notation of
Theorem~\ref{t2}. If $H \in \{ \SSS_n, \AAA_n \}$, then $k(G) \leq
k^{n}$ for every sufficiently large $n$ or $k$.
\end{thm}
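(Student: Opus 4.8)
The plan is to turn the problem into an exact combinatorial identity and then estimate it in the two asymptotic regimes separately. The first step is to apply Lemma~\ref{l1} together with a precise description of the $H$-action on $\Irr(X^n)=\Irr(X)^n$. Writing $\Irr(X)=\{\phi_1,\dots,\phi_k\}$, a point of $\Irr(X)^n$ is a coloring $j\mapsto\phi_{i_j}$ of the $n$ coordinates by $k$ colors, and $\SSS_n$ permutes the coordinates. Hence the $\SSS_n$-orbits are indexed by the type vectors $(n_1,\dots,n_k)$ with $n_c\ge 0$ and $\sum_c n_c=n$ (where $n_c$ is the number of coordinates of color $c$), and the inertia group of a representative of type $(n_1,\dots,n_k)$ is the Young subgroup $\SSS_{n_1}\times\cdots\times\SSS_{n_k}$. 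Since $k(\SSS_m)=p(m)$ is the partition number, Lemma~\ref{l1} yields the identity
\[
k(X\wr\SSS_n)=\sum_{n_1+\cdots+n_k=n}\ \prod_{c=1}^{k}p(n_c)=[x^n]\,P(x)^k,
\qquad P(x)=\prod_{j\ge1}\frac{1}{1-x^j}=\sum_{m\ge0}p(m)\,x^m .
\]
This reduces Theorem~\ref{thm:first-exception} for $H=\SSS_n$ to proving $[x^n]P(x)^k\le k^n$.

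For the regime where $n$ is bounded and $k\to\infty$, I would argue by elementary power-series bookkeeping. Grouping the compositions by the number $j$ of nonzero parts gives $[x^n]P(x)^k=\sum_{j=0}^{n}\binom{k}{j}\,a_{n,j}$, where $a_{n,j}=[x^n](P(x)-1)^j\ge 0$ depends only on $n,j$, and $a_{n,n}=1$ because $P(x)-1=x+2x^2+\cdots$. Thus $[x^n]P(x)^k$ is a polynomial in $k$ of degree $n$ with leading coefficient $a_{n,n}/n!=1/n!$, so $[x^n]P(x)^k=\frac{k^n}{n!}+O_n(k^{n-1})\le k^n$ once $k$ is large enough (with equality only at $n=1$). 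Since the error term is controlled uniformly for $n$ in any bounded range, this settles the ``$k$ large'' case.

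The main obstacle is the regime where $k\ge 2$ is fixed and $n\to\infty$, and here I would use a saddle-point estimate. As all coefficients of $P(x)^k$ are nonnegative, for every $t>0$ (with $x=e^{-t}$) one has $[x^n]P(x)^k\le e^{nt}\,P(e^{-t})^k$. Expanding $\log P(e^{-t})=\sum_{m\ge1}\tfrac{1}{m(e^{mt}-1)}$ and using $e^{mt}-1\ge mt$ gives the clean bound $\log P(e^{-t})\le \tfrac{\pi^2}{6t}$, whence
\[
k(X\wr\SSS_n)=[x^n]P(x)^k\le \exp\!\Big(nt+\tfrac{k\pi^2}{6t}\Big).
\]
Optimizing at $t=\pi\sqrt{k/(6n)}$ produces $k(X\wr\SSS_n)\le e^{\pi\sqrt{2kn/3}}$, a \emph{subexponential} bound in $n$. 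Comparing with $k^n=e^{n\log k}$, the required inequality $e^{\pi\sqrt{2kn/3}}\le k^n$ amounts to $\pi\sqrt{2k/(3n)}\le \log k$, which holds for all sufficiently large $n$ once $k\ge 2$ is fixed. The delicate point is precisely this comparison: the generating-function growth is only $e^{O(\sqrt{n})}$, so it is dwarfed by $k^n$ in the large-$n$ range, whereas equation~(\ref{ee11}) is too lossy here because the transpositions in $\SSS_n$ force $\alpha(\SSS_n)=(n-1)/n$.

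Finally, I would reduce the case $H=\AAA_n$ to the case just treated. Since $X\wr\AAA_n$ has index two in $X\wr\SSS_n$, each conjugacy class of the larger group meets the subgroup in at most two $\AAA_n$-classes, so $k(X\wr\AAA_n)\le 2\,k(X\wr\SSS_n)$. Both estimates above leave a comfortable multiplicative margin (the factor $1/n!$ in the bounded-$n$ case, and the subexponential-versus-exponential gap in the large-$n$ case), so the extra factor of $2$ is absorbed and $k(X\wr\AAA_n)\le k^n$ follows for sufficiently large $n$ or $k$. Alternatively, one could redo the orbit analysis directly for $\AAA_n$: orbits with some part $n_c\ge 2$ coincide with $\SSS_n$-orbits and carry inertia group $(\SSS_{n_1}\times\cdots\times\SSS_{n_k})\cap\AAA_n$, while the fully-distinct colorings (possible only when $k\ge n$) split into pairs of regular orbits; bounding $k((\SSS_{n_1}\times\cdots\times\SSS_{n_k})\cap\AAA_n)$ by $5^{n/3}$ as in the stated non-CFSG portion of \cite{GM} gives the same conclusion.
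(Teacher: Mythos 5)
Your exact identity $k(X\wr\SSS_n)=\sum_{n_1+\cdots+n_k=n}\prod_{c}p(n_c)=[x^n]P(x)^k$ is correct, and it is a genuinely different (and elegant) starting point: the paper uses the same orbit description by weak compositions but only bounds each inertia factor by $5^{n/3}$ via the non-CFSG part of \cite{GM}, whereas you compute the inertia factors exactly. The problem is that your two asymptotic regimes do not exhaust the statement. In this paper, ``$k(G)\le k^n$ for every sufficiently large $n$ or $k$'' means there are absolute constants $N,K$ such that the inequality holds whenever $n\ge N$ \emph{or} $k\ge K$; equivalently, it may fail only when both $n$ and $k$ are bounded. (This is visible in the paper's own proof, whose branch ``$k\ge100$'' covers all large $n$ simultaneously, and in how the theorem is used afterwards: ``we may assume that $n\to\infty$'' with no restriction on $k$.) Your bounded-$n$ argument yields a threshold on $k$ that depends on $n$ (the sub-leading terms, e.g.\ $k\,p(n)$ at $j=1$, grow with $n$), so it applies only for $n$ in a bounded range; your saddle-point argument needs $\pi\sqrt{2k/(3n)}\le\log k$, i.e.\ roughly $n\gtrsim k/\log^{2}k$. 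Hence pairs in which \emph{both} parameters are large but $k$ grows faster than about $n$ (say $k=2^{n}$, $n\to\infty$) are covered by neither argument, and your proof does not exclude failures along such a sequence. The culprit is the estimate $\log P(e^{-t})\le\pi^{2}/(6t)$, which is enormously lossy at the large saddle point $t=\pi\sqrt{k/(6n)}$ forced by that regime, where $\log P(e^{-t})$ is really of size $e^{-t}$, not $1/t$.

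The gap is repairable inside your framework. When $k\ge 2n$, take instead $x=e^{-t}=n/k\le 1/2$; since $\log P(x)=\sum_{i\ge1}\frac{x^{i}}{i(1-x^{i})}\le\frac{-\log(1-x)}{1-x}\le 4x$ for $x\le 1/2$, you get
\[
[x^n]P(x)^k\le \left(\frac{k}{n}\right)^{n}e^{4n}\le k^{n}\qquad(n\ge e^{4}),
\]
while for $38\le k\le 2n$ your original comparison already works because $\pi\sqrt{2k/(3n)}\le\pi\sqrt{4/3}<\log 38\le\log k$; combined with your fixed-$k$ case for $k\le 37$ and your bounded-$n$ case, this covers every pair with $n$ or $k$ large, and the factor $2$ for $\AAA_n$ (your reduction $k(X\wr\AAA_n)\le 2\,k(X\wr\SSS_n)$ is fine) is still absorbed. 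Alternatively, bounding $\prod_{c}p(n_c)\le 5^{n/3}$ and the number of compositions by $k\left(\frac{k+1}{2}\right)^{n-1}$ recovers exactly the paper's uniform branch for $k\ge 100$. As written, however, your proof establishes a weaker statement than the theorem.
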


\begin{proof}
Let $n$ be bounded by an absolute constant. For every sufficiently
large $k$ and for any $n$ at least $3$, we have $$k(G) \leq 2 \cdot
k(X \wr \AAA_n ) \leq 2 \cdot \Big(1 + \frac{1}{kn}\Big)
\frac{k^{n}}{|\AAA_n|} < k^{n}$$ by Lemma~\ref{eee111}. When $n=2$
(and $H = \SSS_2$) the action is regular and this case was treated
earlier (see \cite[Proposition 8.5d]{S}, \cite[Lemma 4.3]{GG} or
Section 2). Let $n \geq 5$. Observe that $n(\SSS_n,\Irr(X)^n)$ is
exactly the number of $k$-tuples $(x_1,...,x_k)\in\ZZ_{\geq 0}^k$
such that $n=x_1+\cdots +x_k$. (These tuples are often referred to
as \emph{weak $k$-compositions} of $n$,  and their number is given
by ${n+k-1 \choose k-1}$.) We have
$$n(\SSS_n,\Irr(X)^n) =  {n+k-1 \choose k-1} \leq \min \Big\{
(n+1)^{k-1}, k \cdot {\Big( \frac{k+1}{2} \Big)}^{n-1} \Big\}.$$ If
$k$ is bounded by an absolute constant, then $$k(G) \leq 5^{n/3}
\cdot 2 \cdot n(\SSS_n,\Irr(X)^n) \leq 5^{n/3} \cdot 2 \cdot
(n+1)^{k-1} < k^{n},$$ for every sufficiently large $n$, by Lemma
\ref{l1} and \cite{GM}. Let $k \geq 100$. We have $$k(G) \leq
5^{n/3} \cdot 2 \cdot k \cdot {\Big( \frac{k+1}{2} \Big)}^{n-1} \leq
k^n,$$ and the proof is complete.
\end{proof}

We shall need a variation of \cite[Lemma~2.1]{DMP} in the next
section.

\begin{lem}
\label{lem:DMP} For every $\epsilon$ and $\gamma$ with
$0<\epsilon<1$ and $0<\gamma<1$, there exists $N=N(\epsilon,\gamma)$
such that for any $n\geq N$ whenever $ x \in \SSS_n$ satisfies
$(1-(1-\epsilon)\gamma)n \leq \sigma(x)$, then $|x^{\SSS_n}| < 2
\cdot n^{4} \cdot |\SSS_n|^{\gamma}$.
\end{lem}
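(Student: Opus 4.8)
The plan is to recast the statement as a \emph{lower} bound on the centralizer order and then reduce everything to a single convex-optimization estimate. Writing the cycle type of $x$ as $1^{a_1}2^{a_2}\cdots$, so that $\sigma(x)=\sum_i a_i$ and $\sum_i ia_i=n$, we have $|x^{\SSS_n}|=n!/z(x)$ with $z(x)=|\cent{\SSS_n}{x}|=\prod_i i^{a_i}a_i!$. Hence the desired inequality $|x^{\SSS_n}|<2n^4|\SSS_n|^{\gamma}$ is equivalent to $z(x)>(n!)^{1-\gamma}/(2n^4)$, and the lemma becomes the assertion that a permutation with many cycles has a large centralizer.

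The heart of the argument is the clean, hypothesis-free inequality
\[
\log z(x)\ \ge\ \sigma(x)\log n-(\log 4)\,n\qquad\text{for every }x\in\SSS_n,
\]
equivalently $z(x)\ge n^{\sigma(x)}/4^{\,n}$. To prove it I would set $F:=\sigma(x)\log n-\log z(x)=\sum_i\bigl(a_i\log(n/i)-\log a_i!\bigr)$ and apply the elementary bound $\log a_i!\ge a_i\log a_i-a_i$ to obtain $F\le\sum_i a_i\log\frac{en}{ia_i}$. The right-hand side is a concave function of $(a_i)$ on the polytope $\{a_i\ge 0:\sum_i ia_i=n\}$, so its maximum is located by one Lagrange computation: the multiplier is $\lambda=\log 2$, the optimum occurs at $a_i=\frac{n}{i}2^{-i}$, and the optimal value is $n\sum_i\frac{1}{i2^i}+n\log 2\sum_i 2^{-i}=2n\log 2=n\log 4$, using $\sum_{i\ge 1}2^{-i}=1$ and $\sum_{i\ge 1}\frac{1}{i2^i}=\log 2$. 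Since the true cycle type of $x$ lies in this polytope, $F\le n\log 4$, which is the claim.

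With this in hand the rest is bookkeeping. Write $a:=(1-\epsilon)\gamma$, so the hypothesis reads $\sigma(x)\ge(1-a)n$ with $a<\gamma$. Combining the centralizer bound with $\log n!=n\log n-n+O(\log n)$ gives
\[
\log|x^{\SSS_n}|=\log n!-\log z(x)\le a\,n\log n+(\log 4-1)n+O(\log n),
\]
while the target $\log\bigl(2n^4(n!)^{\gamma}\bigr)$ equals $\gamma n\log n-\gamma n+O(\log n)$. The target minus our bound is therefore $(\gamma-a)n\log n-O(n)=\epsilon\gamma\,n\log n-O(n)$, which is positive once $n$ exceeds a threshold $N(\epsilon,\gamma)$ determined by when the $\epsilon\gamma\,n\log n$ term overtakes the linear error. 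This is precisely where the factor $(1-\epsilon)$ in the hypothesis is used: it forces a strict gap $\epsilon\gamma$ between the exponent $a$ appearing in the class-size estimate and the exponent $\gamma$ of the target, so the leading $n\log n$ terms do not cancel; the harmless polynomial factor $2n^4$ is swallowed by the $O(\log n)$ terms.

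I expect the hypothesis-free centralizer inequality to be the main obstacle, while everything else is Stirling bookkeeping. The difficulty is that the bound must hold for \emph{all} cycle types simultaneously, including the genuinely different extremes of a single long cycle (where $\prod_i a_i!=1$ and the size comes from $\prod_i i^{a_i}$) and of many transpositions (where the saving comes entirely from $a_2!$). A naive term-by-term estimate loses a factor of $2$ in the exponent, effectively handling only $\epsilon\ge 1/2$, so the concavity/Lagrange optimization—or an equivalent global argument identifying these two extremes as the worst cases—is essential, and that is the step I would treat with the most care.
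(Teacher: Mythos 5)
Your proposal is correct, but it proves the lemma by a genuinely different route than the paper. The paper's proof is a reduction to the cited result \cite[Lemma~2.1]{DMP}, which already gives $|x^{\AAA_n}| < |\AAA_n|^{\gamma}$ for $x\in\AAA_n$ with $\sigma(x)\geq (1-(1-\epsilon)\gamma)n$; the only remaining work is to handle odd permutations, which the paper does by replacing an even-length cycle $\pi$ of $x$ by $\pi^{2}$ to produce $y\in\AAA_n$ with $\sigma(y)=\sigma(x)+1$ and comparing centralizer orders, $|\bC_{\SSS_n}(y)|\leq n^{4}\,|\bC_{\SSS_n}(x)|$ --- this parity trick is exactly where the factor $2n^{4}$ in the statement comes from. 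You instead give a self-contained proof: the universal bound $\prod_i i^{a_i}a_i!\geq n^{\sigma(x)}/4^{n}$, obtained by bounding $\log a_i!\geq a_i\log a_i-a_i$ and maximizing the concave functional $\sum_i a_i\log\frac{en}{ia_i}$ over the polytope $\{a_i\geq 0:\sum_i ia_i=n\}$ (your Lagrange computation checks out: the optimum is $a_i=\frac{n}{i}2^{-i}$ with multiplier $\log 2$ and value $n\log 4$, and concavity plus the gradient inequality makes this a legitimate global maximum even after relaxing to nonnegative real $a_i$ with $i$ ranging over all positive integers), followed by Stirling bookkeeping. Both arguments are valid; yours avoids any appeal to the literature and any case split between $\AAA_n$ and $\SSS_n$, and it in fact yields the stronger conclusion $|x^{\SSS_n}|<|\SSS_n|^{\gamma}$ for all sufficiently large $n$, since your gap $\epsilon\gamma\, n\log n - O(n)$ swallows the polynomial slack $2n^{4}$ entirely --- so you have essentially reproved the DMP estimate rather than quoted it. What the paper's route buys is brevity given the existing literature, and an explanation of why the otherwise odd-looking factor $2n^{4}$ appears in the statement at all.
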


\begin{proof}
According to \cite[Lemma~2.1]{DMP}, whenever $x \in \AAA_n$
satisfies $(1-(1-\epsilon)\gamma)n \leq \sigma(x)$, then
$|x^{\SSS_n}| \leq 2 \cdot |x^{\AAA_n}| < 2 \cdot |\AAA_n|^{\gamma}
< 2 \cdot |\SSS_n|^{\gamma}$.

Let $x \in \SSS_n \setminus \AAA_{n}$ satisfy the inequality
$(1-(1-\epsilon)\gamma)n \leq \sigma(x)$. In the disjoint cycle
decomposition of $x$ there is a cycle $\pi$ of even length, say
$2r$. Let $c_{r}$ and $c_{2r}$ be the number of cycles of lengths
$r$ and $2r$ respectively in the disjoint cycle decomposition of
$x$. We have $|\bC_{\SSS_{n}}(x)| = a \cdot r^{c_{r}} \cdot c_{r}!
\cdot (2r)^{c_{2r}} \cdot c_{2r}!$ for some positive integer $a$.
Let $y \in \AAA_n$ be the permutation obtained from $x$ by replacing
$\pi$ by $\pi^2$. We have $|\bC_{\SSS_{n}}(y)| = a \cdot r^{c_{r}+2}
\cdot (c_{r}+2)! \cdot (2r)^{c_{2r}-1} \cdot (c_{2r}-1)!$. It is
easy to see that $|\bC_{\SSS_n}(y)| \leq n^{4} \cdot
|\bC_{\SSS_{n}}(x)|$ from which it follows that $|x^{S_{n}}| \leq
n^{4} \cdot |y^{\SSS_{n}}|$. Since $\sigma(y) = \sigma(x) + 1$, we
have $(1-(1-\epsilon)\gamma)n \leq \sigma(y)$ by hypothesis and so
$|y^{\SSS_n}| < 2 \cdot |\SSS_n|^{\gamma}$ by the first paragraph.
This gives $|x^{\SSS_n}| < 2 \cdot n^{4} \cdot |\SSS_n|^{\gamma}$.
\end{proof}

Theorem \ref{thm:first-exception} can also be proved using
Lemma~\ref{lem:DMP}, as follows.

\begin{proof}[Second proof of Theorem \ref{thm:first-exception}]
Let $\epsilon$ and $\gamma$ be such that $0<\epsilon<1$ and
$0<\gamma<1$ such that $\delta := 1 -
(1-\epsilon)\gamma<1-\log_2(5)/3$. Let $\beta$ be such that $\gamma
< \beta <1$. There exists by Lemma \ref{lem:DMP} an integer $N$ such
that whenever $n \geq N$ the inequality $\delta n \leq \sigma(x)$
(for $x \in \SSS_n$) implies $|x^{\SSS_n}| < |\SSS_n|^{\beta}$. It
follows that the number of elements $x \in \SSS_n$ such that
$\sigma(x) \geq \delta n$ is less than $|\SSS_{n}|^{\beta} p(n)$
where $p(n)$ denotes the number of partitions of $n$.

By Lemma~\ref{l2} we have
$$n(\SSS_n,\Irr(X)^n) =\frac{1}{|\SSS_n|}\sum_{h\in \SSS_n}k^{\sigma(h)} =
\frac{1}{|\SSS_n|}\underset{\sigma(h) < \delta n}{\sum_{h\in \SSS_n}}k^{\sigma(h)} + \frac{1}{|\SSS_n|}
\underset{\sigma(h) \geq \delta n}{\sum_{h\in \SSS_n}}k^{\sigma(h)}$$
$$< k^{\delta n} + \frac{1}{|\SSS_n|} k^{n}  +  |\SSS_n|^{\beta -1} p(n) k^{n-1}.$$
Since $p(n)<13.01^{\sqrt{n}}$ by \cite{Erdos}, it follows that
$$n(\SSS_n,\Irr(X)^n) < \frac{k^{n}}{2\cdot 5^{n/3}}$$ for every
sufficiently large $n$ or $k$. By Lemma \ref{l1} and \cite{GM}, it
follows that $$k(G) = k(X \wr H) \leq 2 \cdot
5^{n/3}n\left(\SSS_n,\Irr(X)^n\right) < 2 \cdot
5^{n/3}\frac{k^{n}}{2\cdot 5^{n/3}} = k^{n}$$ for every sufficiently
large $n$ or $k$, as wanted.
\end{proof}

In order to finish the proof of Theorem \ref{t2} for primitive
groups, we may assume that $n \to \infty$. This follows from Theorem
\ref{thm:first-exception} and the paragraph following
(\ref{eee111}).

%%%%%%%%%%%%%%%%%%%%%%%%%
%%%%%%%%%%%%%%%%%%%%%%%%%

\section{Bounding the number of orbits of $\SSS_m$ on $\Irr(X^{{m\choose\ell}})$}
\label{sec:bounding-orbit}

To complete the proof of Theorem~\ref{t2} for primitive groups, it
remains to address the families of groups listed in (ii) and (iii)
of Theorem~\ref{t1}. These exceptional primitive permutation groups,
together with the groups in (i), fall into a broader class of
groups, which we will analyze collectively. We remark that tackling
each specific family individually does not significantly simplify
the proof.

\begin{defn}
\label{def} We say that $H$ is a \emph{large base permutation group}
of degree $n$ if $(\AAA_m)^t \leq H \leq \SSS_m \wr \SSS_t$ with $t
\geq 1$ and $m\geq 5$, where the action of $\SSS_m$ is on
$\ell$-element subsets of $\{1, \ldots , m\}$ with $1\leq\ell<m/2$
and the wreath product has the product action of degree $n = {m
\choose \ell}^t$.
\end{defn}

\begin{notation}\label{notation} We will fix the following notation
when working with large base groups:
\begin{enumerate}[\rm(i)]
\item $\Omega$ is the set of $\ell$-subsets of $\{1,\ldots,m\}$,
\item $B_t:=\Irr(X)^{{m\choose\ell}^t}$,
\item $B:=B_1=\Irr(X)^{{m\choose\ell}}$.
%\item $n((\SSS_m)^t,B_t)$ is the number of $(\SSS_m)^t$-orbits on $B_t$.
\end{enumerate}
\end{notation}

The goal of this section is to obtain an asymptotic bound for
$n(\SSS_m,B)$. From this point on we change notation. For a
permutation $\pi$ in $\SSS_m$, we denote the number of cycles in the
disjoint cycle decomposition of $\pi$ by $\sigma(\pi)$, while
$\sigma'(\pi)$ will denote the number of cycles in the disjoint
cycle decomposition of $\pi$ acting on the set of $\ell$-element
subsets of $\{ 1, \ldots, m \}$.

Given $j \in \{1,\ldots, m\}$, we write
$$\mathcal{S}(j,m):=|\{\pi \in \SSS_m|\sigma(\pi)=j\}|.$$
This number $\mathcal{S}(j,m)$ is often referred to as the Stirling
number of the first kind.

\begin{lem}
\label{lem:101}
$\mathcal{S}(j,m)<(m!)^{0.41}$ for every sufficiently large $m$ and $j>3m/4$.
\end{lem}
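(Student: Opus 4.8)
The plan is to pass to cycle types and bound both the size of each conjugacy class of $\SSS_m$ having exactly $j$ cycles and the number of such classes. Write $d := m - j$, so that the hypothesis $j > 3m/4$ becomes $d < m/4$. For a partition $\lambda \vdash m$ in which the part $i$ occurs $a_i$ times, the permutations $\pi$ of that cycle type number $m!/z_\lambda$ where $z_\lambda := \prod_i i^{a_i} a_i! = |\bC_{\SSS_m}(\pi)|$ and $\sigma(\pi) = \sum_i a_i = j$. Hence
\[
\mathcal{S}(j,m) = \sum_{\substack{\lambda \vdash m \\ \sum_i a_i = j}} \frac{m!}{z_\lambda} \le p(d) \cdot \max_{\lambda} \frac{m!}{z_\lambda},
\]
since the cycle types with exactly $j$ parts are in bijection (subtract $1$ from each part) with the partitions of $d$ into at most $j$ parts, of which there are at most $p(d) \le p(m) < 13.01^{\sqrt m}$ by \cite{Erdos}. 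As this factor is subexponential in $m$, the problem reduces to showing that every individual class size satisfies $\log(m!/z_\lambda) \le \tfrac14\, m \log m + O(m)$ uniformly over the relevant $\lambda$.

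For the class-size estimate I would set $p_i := i a_i / m$, so that $(p_i)$ is a probability distribution ($\sum_i p_i = 1$) and $\sum_i p_i/i = (\sum_i a_i)/m = j/m = 1 - d/m$. Applying Stirling's formula to $m!$ and to each $a_i!$ (the accumulated lower-order errors being $O(\sqrt m \log m)$, since there are $O(\sqrt m)$ distinct part sizes) and substituting $a_i = m p_i/i$ gives
\[
\log \frac{m!}{z_\lambda} = d \log m \;-\; m \sum_i \frac{p_i \log p_i}{i} \;+\; O(m).
\]
The crux is to control the middle term uniformly in $\lambda$. Using the elementary inequality $-x \log x \le \sqrt{x}$ on $[0,1]$ followed by Cauchy--Schwarz,
\[
-\sum_i \frac{p_i \log p_i}{i} \le \sum_i \frac{\sqrt{p_i}}{i} \le \Big(\sum_i p_i\Big)^{1/2} \Big(\sum_i \frac{1}{i^2}\Big)^{1/2} = \frac{\pi}{\sqrt 6},
\]
a bound independent of both $\lambda$ and $m$. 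Thus the middle term is $O(m)$, and since $d < m/4$ we obtain $\log(m!/z_\lambda) \le d \log m + O(m) \le \tfrac14\, m \log m + O(m)$ uniformly.

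Combining the two steps yields $\log \mathcal{S}(j,m) \le \tfrac14\, m \log m + O(m)$, whereas $0.41 \log(m!) = 0.41\, m \log m - O(m)$. Because $\tfrac14 = 0.25 < 0.41$, the gap $(0.41 - 0.25) m \log m$ eventually dominates every $O(m)$ term, so $\mathcal{S}(j,m) < (m!)^{0.41}$ for all sufficiently large $m$, uniformly in $j > 3m/4$.

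The main obstacle is precisely the uniform weighted-entropy bound $-\sum_i (p_i \log p_i)/i = O(1)$. A naive estimate, such as bounding each class size by $m$ raised to the number of moved points (which is at most $2d$), only yields exponent $2d/m < 1/2$, and $1/2 > 0.41$, so it fails. It is therefore essential both to retain the factors $a_i!$ appearing in $z_\lambda$ and to prove that the resulting entropy-type sum is genuinely bounded rather than of order $\log m$; the $1/i$ weighting coming from the $i^{a_i}$ factors is exactly what makes this sum converge to a constant.
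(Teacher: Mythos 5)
Your proof is correct, and it takes a genuinely different route from the paper's at the decisive step. Both arguments share the same outer structure: $\mathcal{S}(j,m)$ is split as (number of cycle types with exactly $j$ cycles) times (largest conjugacy class size), and the first factor is dismissed via Erd\H{o}s's bound $p(m)<13.01^{\sqrt{m}}$ from \cite{Erdos}, which is subexponential and hence harmless (your count $p(d)$, $d=m-j$, via subtracting $1$ from each part, is a slight refinement of the paper's $p(m)$). The difference is how each class size is bounded. The paper simply invokes Lemma~\ref{lem:DMP} with $\gamma=2/5$ and $\epsilon=3/8$: every class of permutations with more than $3m/4$ cycles has size less than $2m^4(m!)^{2/5}$, whence $\mathcal{S}(j,m)\leq 2m^4p(m)(m!)^{2/5}<(m!)^{0.41}$ for large $m$. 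Since Lemma~\ref{lem:DMP} rests on the external result \cite[Lemma~2.1]{DMP}, the paper's proof is short but outsources the real work; this is economical because the paper needs Lemma~\ref{lem:DMP} anyway for its second proof of Theorem~\ref{thm:first-exception}. You instead prove the class-size bound from first principles: writing the class size as $m!/z_\lambda$, applying Stirling, and reducing everything to the uniform weighted-entropy estimate $-\sum_i (p_i\log p_i)/i\leq \pi/\sqrt{6}$, which you obtain from $-x\log x\leq\sqrt{x}$ on $[0,1]$ (valid, since $2ue^{-u}\leq 2/e<1$) and Cauchy--Schwarz. All steps check out, including the Stirling bookkeeping (only the upper bound $\log(m!/z_\lambda)\leq d\log m+O(m)$ is actually needed, and the lower bound $a_i!\geq(a_i/e)^{a_i}$ suffices for that). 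Your route is longer but elementary and self-contained, and it yields a strictly sharper class-size exponent, $(m!)^{1/4+o(1)}$ versus the paper's $(m!)^{2/5}$; indeed your exponent is tight (take $m/4$ transpositions and $m/2$ fixed points), so your argument shows that the constant $0.41$ in the lemma could be replaced by any constant exceeding $1/4$, whereas the paper's method cannot go below $2/5$ without strengthening Lemma~\ref{lem:DMP}.
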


\begin{proof}
In Lemma \ref{lem:DMP}, let us take $\gamma=2/5$ and
$1-(1-\epsilon)\gamma=3/4$ (that is, $\epsilon=3/8$). Assume that
$m\geq N(\epsilon,\gamma)$ and $j>3m/4$. The set $\{\pi \in
\SSS_m|\sigma(\pi)=j\}$ is a union of conjugacy classes of $\SSS_m$.
Since all elements in $\{\pi \in \SSS_m|\sigma(\pi)=j\}$ satisfy
that $ \sigma(\pi)=j > 3m/4=(1-(1-\epsilon)\gamma)m$, we deduce that
$$\mathcal{S}(j,m)\leq 2m^4p(m)|\SSS_m|^{\gamma}<
2m^413.01^{\sqrt{m}}|\SSS_m|^{2/5},$$ and the lemma follows.
\end{proof}

%For the next result, we recall that, if $\pi\in\SSS_m$, then $\sigma(\pi)$ is the number of cycles in the disjoint cycle decomposition of $\pi$.

\begin{lem}
\label{lem:100} For $\pi\in\SSS_m$, let $\fix(\pi)$ denote the set
of $\ell$-subsets of $\{1,\ldots m\}$ that are fixed under $\pi$.
There exists a positive integer $N$ such that \[|\fix(\pi)|<
\frac{3}{4}{m\choose \ell}\] for every $m\geq N$, $1\leq\ell < m/2$,
and $\sigma(\pi)\leq 3m/4$.
\end{lem}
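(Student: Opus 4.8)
The plan is to use the elementary fact that a subset of $\{1,\ldots,m\}$ is fixed by $\pi$ exactly when it is a union of cycles of $\pi$. Writing $c_1,\ldots,c_{\sigma(\pi)}$ for the cycle lengths, this makes $|\fix(\pi)|$ the coefficient of $x^{\ell}$ in $\prod_i(1+x^{c_i})$. Let $f$ denote the number of fixed points of $\pi$ on $\{1,\ldots,m\}$. The only place the hypothesis will enter is through $f\le\sigma(\pi)\le 3m/4$ together with $\pi\neq 1$ (forced, since $\sigma(1)=m$). I would first dispose of $\ell=1$ exactly: here $|\fix(\pi)|=f$, and $f=3m/4$ would force $\sigma(\pi)=f$, i.e.\ $\pi=1$, a contradiction, so $f<\tfrac34 m$. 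This borderline case already shows the constant $3/4$ is essentially sharp, and signals where the difficulty lies.

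For $\ell\ge 2$ I would pass to a cleaner upper bound by a pairing argument. Inside each cycle pick $\lfloor c_i/2\rfloor$ disjoint pairs of points; any $\pi$-invariant set contains both or neither point of each pair. With $P:=\sum_i\lfloor c_i/2\rfloor$ and $f':=m-2P$, this gives \[|\fix(\pi)|\le [x^{\ell}](1+x^2)^P(1+x)^{f'}=\sum_{j}\binom{P}{j}\binom{f'}{\ell-2j}.\] The decisive observation is that $f'=f+(\text{number of odd nontrivial cycles})\le f+(\sigma(\pi)-f)=\sigma(\pi)\le\tfrac34 m$, whence $2P=m-f'\ge m/4$. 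Everything is now reduced to proving $[x^{\ell}](1+x^2)^P(1+x)^{f'}<\tfrac34\binom{m}{\ell}$ for $2\le\ell<m/2$.

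I would split according to the size of $\ell$. For large $\ell$ (say $\ell\ge C\log m$ with $C$ an absolute constant) I would \emph{tilt}: since all coefficients are nonnegative, $[x^{\ell}]F\le\theta^{-\ell}F(\theta)$ for any $\theta>0$, and taking $\theta=\ell/(m-\ell)$ together with the standard lower bound $\binom{m}{\ell}\ge (m+1)^{-1}\theta^{-\ell}(1+\theta)^m$ collapses the ratio, using $m-f'=2P$, to \[(m+1)\,\frac{(1+\theta^2)^P}{(1+\theta)^{2P}}\le (m+1)\exp\!\Big(-\tfrac{2\theta P}{(1+\theta)^2}\Big).\] Since $\tfrac{2\theta}{(1+\theta)^2}\ge\theta/2\ge\ell/(2m)$ for $0<\theta\le 1$ and $P\ge m/8$, the exponent is at least $\ell/16$, so the bound is at most $(m+1)e^{-\ell/16}$, which drops below $3/4$ once $\ell\ge C\log m$. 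For small $\ell$ ($2\le\ell<C\log m$) I would instead isolate the $j=0$ term: because $f'\le\tfrac34 m$ one has $\binom{f'}{\ell}/\binom{m}{\ell}\le(f'/m)^{\ell}\le(3/4)^{\ell}\le 9/16$, while the tail $\sum_{j\ge 1}\binom{P}{j}\binom{f'}{\ell-2j}/\binom{m}{\ell}$ is a geometric-type sum of ratio $O(\ell^2/m)=O(\log^2 m/m)=o(1)$ (here $\ell<m/2$ is used, via $\binom{m}{\ell-2j}/\binom{m}{\ell}\le(2\ell/m)^{2j}$). Thus the ratio is at most $9/16+o(1)<3/4$ for all large $m$, and the union of the two ranges together with $\ell=1$ covers $1\le\ell<m/2$, supplying the required $N$.

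The hard part is precisely the small-$\ell$ regime and the matching of the two ranges: the ratio is genuinely close to $3/4$ when $\pi$ has about $3m/4$ fixed points and $\ell$ is tiny, so there is no slack, and the whole content is that already at $\ell=2$ the main term has dropped to $9/16$ while the corrections remain $o(1)$ uniformly up to $\ell\approx\log m$, where the tilting estimate takes over. Controlling those corrections uniformly is the only genuinely delicate step; the tilting bound and the $\ell=1$ case are routine once the inequality $f'\le\sigma(\pi)$ is in hand.
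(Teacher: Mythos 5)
Your proof is correct, but it takes a genuinely different route from the paper's. The paper starts from the exact formula $|\fix(\pi)|=\sum_{\lambda\in\mathcal{P}(\ell)}\binom{\alpha_1}{\lambda_1}\binom{\alpha_2}{\lambda_2}\cdots$ (choosing $\lambda_i$ cycles of length $i$), applies the superadditivity bound $\binom{a}{b}\binom{c}{d}\le\binom{a+c}{b+d}$ to reduce to $|\fix(\pi)|\le\sum_{\lambda\in\mathcal{P}(\ell)}\binom{\sigma(\pi)}{l(\lambda)}$, and then runs a case analysis ($\ell\le\sigma(\pi)/2$ or not, $m>\ell+\sigma(\pi)$ or not, $\ell\ge 59$ or $\ell\le 58$) in which the number of summands is controlled by Erd\H{o}s's bound $p(\ell)<13.01^{\sqrt{\ell}}$ on the partition function and binomial ratios supply compensating factors like $(4/3)^{\ell}$. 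You instead relax ``union of cycles'' to ``respects a pairing'': the bound $|\fix(\pi)|\le[x^{\ell}](1+x^2)^{P}(1+x)^{f'}$ with $f'\le\sigma(\pi)\le 3m/4$ (hence $2P\ge m/4$) discards most of the cycle structure but keeps exactly what is needed, and you then split at $\ell\asymp\log m$: exponential tilting for large $\ell$ (valid since $\theta=\ell/(m-\ell)<1$ precisely because $\ell<m/2$), and isolation of the $j=0$ term plus a geometric tail of ratio $O(\ell^2/m)$ for small $\ell$. I checked the key steps — $f'\le\sigma(\pi)$, the standard estimate $\binom{m}{\ell}\ge(m+1)^{-1}\theta^{-\ell}(1+\theta)^m$ at $\theta=\ell/(m-\ell)$, the inequality $\binom{m}{\ell-2j}/\binom{m}{\ell}\le(2\ell/m)^{2j}$, and the exact treatment of $\ell=1$ — and they all hold, with the three ranges $\ell=1$, $2\le\ell<C\log m$, $C\log m\le\ell<m/2$ covering everything uniformly, so a single $N$ exists. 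What your approach buys: it avoids partition-function estimates entirely, the constants are explicit, and it makes the role of the constant $3/4$ transparent — sharp only at $\ell=1$, dropping to $9/16+o(1)$ already at $\ell=2$. What the paper's approach buys: it keeps exact cycle-type information using only binomial inequalities plus one classical partition bound, and (as its remark notes) it visibly adapts to give $|\fix(\pi)|<\epsilon\binom{m}{\ell}$ whenever $\sigma(\pi)\le\delta m$ for any $\epsilon$; your method generalizes too, but the $j=0$ term forces roughly $\delta<\sqrt{\epsilon}$, so the parameters would need re-tuning.
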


\begin{rem}
We choose the constant $3/4$ in the bound to streamline the proofs.
The same argument shows that, for any positive constance $\epsilon$,
there exists $\delta>0$ such that $|\fix(\pi)|< \epsilon{m\choose
\ell}$ whenever $\sigma(\pi)\leq \delta m$. We also note that a
strong bound for $|\fix(\pi)|$ is given in \cite[Lemma~3.2]{EG},
from which our result can alternatively be deduced. We thank the
referee for pointing out this reference.
\end{rem}

\begin{proof}[Proof of Lemma~\ref{lem:100}]
For each $i$ with $1 \leq i \leq m$, let $\alpha_i$ be the number of
cycles in $\pi$ of length $i$. We have $\sigma(\pi)=\sum_i \alpha_i$
and $m=\sum_i i\alpha_i$. Let $\mathcal{P}(\ell)$ denote the set of
partitions of the integer $\ell$. For each $\lambda\in
\mathcal{P}(\ell)$, let $\lambda_i$ denote the number of parts of
$\lambda$ equal to $i$.  Then
\[
|\fix(\pi)|=\sum_{\lambda\in\mathcal{P}(\ell)} {\alpha_1\choose \lambda_1}\cdot {\alpha_2\choose \lambda_2}\cdots
\]
Using the well-known estimate ${a\choose b}\cdot{c\choose d}\leq { a+c\choose b+d}$, we obtain
\begin{equation}
\label{e10}
|\fix(\pi)|\leq \sum_{\lambda\in\mathcal{P}(\ell)} {\sum\alpha_i\choose \sum\lambda_i} = \sum_{\lambda\in\mathcal{P}(\ell)} {\sigma(\pi)\choose l(\lambda)},
\end{equation}
where $l(\lambda)$ is the number of parts of $\lambda$.

\medskip

I. Assume first that $\ell\leq \sigma(\pi)/2$. Then
${\sigma(\pi)\choose l(\lambda)}\leq {\sigma(\pi)\choose \ell}$ for
every $\lambda\in\mathcal{P}(\ell)$. It follows from \eqref{e10}
that
\[
|\fix(\pi)|\leq p(\ell) \cdot {\sigma(\pi)\choose \ell}.
\]
Assume furthermore that $m>\ell+\sigma(\pi)$. We then have
\begin{align*}
{m\choose \ell}&=\frac{m(m-1)\cdots (\sigma(\pi)+1)}{(m-\ell)(m-1-\ell)\cdots(\sigma(\pi)+1-\ell)}\cdot{\sigma(\pi)\choose\ell}\\&=\frac{m(m-1)\cdots(m-\ell+1)}{\sigma(\pi)(\sigma(\pi)-1)\cdots (\sigma(\pi)-\ell+1)}\cdot{\sigma(\pi)\choose\ell}\\
&\geq\left(\frac{4}{3}\right)^\ell\cdot{\sigma(\pi)\choose\ell},
\end{align*}
where the last inequality follows from the hypothesis on
$\sigma(\pi)$. The lemma then follows if $(4/3)^{\ell-1}> p(\ell)$.
This is true when $\ell\geq 59$, by using the bound for the
partition function in \cite{Erdos}.

We therefore may assume that $\ell\leq 58$. By \eqref{e10} and the hypothesis,
\begin{equation}
\label{e11}
|\fix(\pi)|\leq  \sum_{\lambda\in\mathcal{P}(\ell)} {[3m/4]\choose l(\lambda)}.
\end{equation}
Thus, we are done if
\[
\sum_{\lambda\in\mathcal{P}(\ell)} {[3m/4]\choose l(\lambda)}<\frac{3}{4}{m\choose \ell}.
\]
For each fixed $\ell$ with $\ell \leq 58$, we observe that the
right-hand side is a polynomial (in $m$) of degree $\ell$, while the
left-hand side is a polynomial of degree at most $\ell$ and if equal
to $\ell$ then with smaller leading coefficient. Therefore the
inequality holds for every sufficiently large $m$, and we are done.

Now assume that $m\leq \ell+\sigma(\pi)$. Then
\begin{align*}
{m\choose \ell}\geq\left(\frac{m}{m-\ell}\right)^{m-\sigma(\pi)}\cdot{\sigma(\pi)\choose\ell}\geq \left(\frac{m}{m-\ell}\right)^{m/4}\cdot{\sigma(\pi)\choose\ell}\geq \left(\frac{4}{3}\right)^{m/4}\cdot{\sigma(\pi)\choose\ell}.
\end{align*}
As above, the desired inequality follows from these bounds for every
sufficiently large $m$.

\medskip

II. Next we consider the case $\ell> \sigma(\pi)/2$. Then
${\sigma(\pi)\choose l(\lambda)}\leq {\sigma(\pi)\choose
[\sigma(\pi)/2]}$ for every $\lambda\in\mathcal{P}(\ell)$. As in the
previous case, it follows from \eqref{e10} that
\[
|\fix(\pi)|\leq p(\ell) \cdot {\sigma(\pi)\choose [\sigma(\pi)/2]}.
\]
On the other hand, by the hypothesis on $\sigma(\pi)$ and $\ell$, we have
\begin{align*}
{m\choose \ell}&=\frac{(m-[\sigma(\pi)/2])\cdots(m-\ell+1)}{\ell(\ell-1)\cdots([\sigma(\pi)/2]+1)}\cdot{m\choose[\sigma(\pi)/2]}\\
&\geq \left(\frac{5}{4}\right)^{\ell-[\sigma(\pi)/2]}\cdot{m\choose[\sigma(\pi)/2]}.
\end{align*}
The lemma now follows by similar estimates as in the previous case,
but for ${m\choose[\sigma(\pi)/2]}$ instead of ${m\choose \ell}$.
\end{proof}

%%%%%%%%%%%%%%%%%%%%%%%

\begin{prop}
\label{prop:11} Let $X$ be a nontrivial finite group with $k$
conjugacy classes. There exists a positive integer $N$ (independent
of $k$) such that
\[
n(\SSS_m,B)< 2\max\left\{k^{\frac{7}{8}{m\choose\ell}},(m!)^{-0.58}k^{{m\choose\ell}}\right\}.
\]
for every $m\geq N$ and $1\leq\ell<m/2$.
\end{prop}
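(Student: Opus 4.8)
The plan is to start from the orbit-counting formula of Lemma~\ref{l2}, applied to the action of $\SSS_m$ on the ${m\choose\ell}$ coordinates of $B$ induced by its action on the set $\Omega$ of $\ell$-subsets. This gives
$$n(\SSS_m, B) = \frac{1}{m!}\sum_{\pi\in\SSS_m} k^{\sigma'(\pi)}.$$
I would then split the sum according to the size of $\sigma(\pi)$, separating the permutations with \emph{few} cycles ($\sigma(\pi) \leq 3m/4$) from those with \emph{many} cycles ($\sigma(\pi) > 3m/4$). The first family will account for the term $k^{\frac78{m\choose\ell}}$ and the second for the term $(m!)^{-0.58}k^{{m\choose\ell}}$; combining the two bounds with the trivial inequality $a+b\le 2\max\{a,b\}$ will then yield the stated proposition.

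For the permutations with $\sigma(\pi)\le 3m/4$, I would invoke Lemma~\ref{lem:100} to get $|\fix(\pi)| < \frac34{m\choose\ell}$, where $\fix(\pi)$ denotes the set of fixed $\ell$-subsets. The number of cycles on $\Omega$ is then controlled exactly as in inequality \eqref{e3}: the fixed $\ell$-subsets are fixed points, while every other point of $\Omega$ lies in a cycle of length at least $2$, so
$$\sigma'(\pi) \le |\fix(\pi)| + \frac{{m\choose\ell}-|\fix(\pi)|}{2} = \frac{{m\choose\ell}+|\fix(\pi)|}{2} < \frac78{m\choose\ell}.$$
Since there are at most $m!$ such permutations, their total contribution to the sum is at most $m!\cdot k^{\frac78{m\choose\ell}}$, which after dividing by $m!$ gives the bound $k^{\frac78{m\choose\ell}}$.

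For the permutations with $\sigma(\pi) > 3m/4$, I would use only the crude estimate $\sigma'(\pi) \le {m\choose\ell}$ and count these permutations via Lemma~\ref{lem:101}: for each integer $j$ with $3m/4 < j \le m$ there are fewer than $(m!)^{0.41}$ of them, and there are at most $m$ admissible values of $j$, so altogether fewer than $m\,(m!)^{0.41}$ such permutations. Their contribution to the sum is therefore at most $m\,(m!)^{0.41}\,k^{{m\choose\ell}}$, and dividing by $m!$ gives $m\,(m!)^{-0.59}\,k^{{m\choose\ell}}$. Since $m < (m!)^{0.01}$ for all sufficiently large $m$, this is at most $(m!)^{-0.58}\,k^{{m\choose\ell}}$, as desired (note that the identity, which has $\sigma(\pi)=m$ and $\sigma'(\pi)={m\choose\ell}$, is harmlessly absorbed into this family).

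The technical heart of the argument is Lemma~\ref{lem:100}, but it is already available, so given the two counting lemmas the remaining work is essentially bookkeeping. The one point requiring genuine care is the uniformity of the threshold $N$: it must be chosen independently of $k$. This holds because the thresholds supplied by Lemmas~\ref{lem:100} and \ref{lem:101}, together with the inequality $m < (m!)^{0.01}$, all depend only on $m$; hence the value of $N$ at which all three estimates simultaneously take effect is independent of $k$, as the statement requires.
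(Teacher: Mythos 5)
Your proposal is correct and follows essentially the same route as the paper's own proof: the same orbit-counting formula, the same split of the sum at $\sigma(\pi)\leq 3m/4$, the same use of Lemma~\ref{lem:100} with inequality \eqref{e14} for the few-cycle part, and the same use of Lemma~\ref{lem:101} with the crude bound $\sigma'(\pi)\leq\binom{m}{\ell}$ for the many-cycle part. Your closing remark on the $k$-independence of $N$ is a point the paper leaves implicit, and it is handled correctly.
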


\begin{proof}
Recall from Notation~\ref{notation} that $\Omega$ is the set of
$\ell$-subsets of $\{1, \ldots, m\}$. Let $\pi\in \SSS_m$. Let
$\fix(\pi)$ denote the set of $\ell$-subsets fixed by $\pi$, as in
Lemma~\ref{lem:100}. Note that
\begin{equation}
\label{e14}
\sigma'(\pi)\leq |\fix(\pi)|+\frac{1}{2}\left({m\choose\ell}-|\fix(\pi)|\right)=\frac{1}{2}\left({m\choose\ell}+|\fix(\pi)|\right).
\end{equation}
Furthermore, by Lemma~\ref{l2},
\[
n(\SSS_m,B)=\frac{1}{|\SSS_m|} \sum_{\pi\in \SSS_m} {k^{\sigma'(\pi)}}.
\]
We decompose this into two smaller sums, depending on whether
$\sigma(\pi)$  is smaller or larger than $3m/4$:
\[n(\SSS_m,B)=n_1(\SSS_m,B_1)+n_2(\SSS_m,B_1),\] where
\[
n_1(\SSS_m,B)=\frac{1}{|\SSS_m|} \sum_{\sigma(\pi)\leq 3m/4} {k^{\sigma'(\pi)}} \text{ and } n_2(\SSS_m,B)=\frac{1}{|\SSS_m|} \sum_{\sigma(\pi)> 3m/4} {k^{\sigma'(\pi)}}.
\]
By using \eqref{e14}, we get
\[
n_1(\SSS_m,B)\leq \frac{1}{m!}\sum_{\sigma(\pi)\leq 3m/4}{k^{\frac{1}{2}\left({m\choose\ell}+|\fix(\pi)|\right)}},
\]
and it follows from Lemma~\ref{lem:100} that
\[
n_1(\SSS_m,B)\leq k^{\frac{7}{8}{m\choose\ell}}
\]
for every $m\geq N_1$ for some positive integer $N_1$.

We now work on $n_2(\SSS_m,B)$. Recall that $\mathcal{S}(j,m)$
denotes the number of elements of $\SSS_m$ with precisely $j$
cycles. So
\[
n_2(\SSS_m,B) \leq \frac{1}{m!}\sum_{j> 3m/4}\mathcal{S}(j,m)k^{{m \choose \ell}}.
\]
Using the bound for $\mathcal{S}(j,m)$ in Lemma~\ref{lem:101}, we deduce that
\[
n_2(\SSS_m,B)\leq\frac{1}{4}m(m!)^{-0.59}k^{m\choose\ell}
\]
for every $m\geq N_2$ for some positive integer $N_2$.
Now taking $N_3:=\max\{N_1,N_2\}$, we arrive at
\[
n(\SSS_m,B)\leq k^{\frac{7}{8}{m\choose\ell}}+\frac{1}{4}m(m!)^{-0.59}k^{m\choose\ell}\]
for every $m\geq N_3$, and the result readily follows.
\end{proof}

%\begin{lem}
%\label{lem:31}
%Suppose that $m$ is bounded above by a positive integer $N$. Then there exists $\alpha:=\alpha(N)<1$ such that
%\[
%n(\SSS_m,B)<\frac{k^{m\choose\ell}}{m!}+k^{\alpha{m\choose\ell}}.
%\]
%\end{lem}
%
%\begin{proof}
% Setting
%\[
%\alpha:=\alpha(N)=\underset{1\leq\ell<m/2}{\underset{m\leq N}{\underset{\pi\neq 1}{\max}}}\frac{\sigma'(\pi)}{{m\choose \ell}}\leq \frac{N-1}{N}<1,
%\]
%we have $\sigma'(\pi)\leq \alpha{m\choose\ell}$ for every non-identity permutation $\pi$ in $\SSS_m$. Therefore,
%\[
%n(\SSS_m,B)=\frac{1}{|\SSS_m|} \sum_{\pi\in \SSS_m} {k^{\sigma'(\pi)}}<\frac{k^{m\choose\ell}}{|\SSS_m|}+k^{\alpha{m\choose\ell}},
%\]
%as stated.
%\end{proof}

%%%%%%%%%%%%%%%%%%%%%%%%%%%%%%%
%%%%%%%%%%%%%%%%%%%%%%%%%%%%%%%

\section{Large base groups}
\label{sec:6}

In this section we complete the proof of Theorem~\ref{t2} for
primitive permutation groups by proving the following.

\begin{thm}
\label{thm:10} Let $X$ be a non-trivial finite group and let
$k:=k(X)$. Let $t\geq 1$, $m\geq 5$, $1\leq\ell<m/2$, and
$(t,\ell)\neq(1,1)$. Let $H$ be a large base primitive permutation
group of degree $n:={m\choose\ell}^t$, as in Definition~\ref{def}.
Let $G=X\wr H$. Then $k(G)\leq k^n$ for every sufficiently large $k$
or $n$.
\end{thm}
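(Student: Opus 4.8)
The plan is to feed the orbit-counting identity of Lemma~\ref{l1} into two estimates: an upper bound for the number $n(H,B_t)$ of orbits of $H$ on $B_t=\Irr(X)^{n}$, and a bound for the class numbers $k(I_H(\chi))$ of the inertia subgroups that is sub-factorial in $m$. Since $(\AAA_m)^t\le H$, every $H$-orbit is a union of $(\AAA_m)^t$-orbits, so $n(H,B_t)\le n((\AAA_m)^t,B_t)$ and it is enough to estimate the latter; this lets me work with elements $\vec g=(g_1,\dots,g_t)\in(\AAA_m)^t$ acting coordinatewise on $\Omega^t$ and avoid the top group $\SSS_t$ entirely. Throughout I write $c(\vec g)$ for the number of cycles of $\vec g$ on $\Omega^t$, so that by Lemma~\ref{l2} one has $n((\AAA_m)^t,B_t)=\tfrac{1}{|\AAA_m|^t}\sum_{\vec g}k^{c(\vec g)}$.

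\emph{The orbit count.} For the product action the fixed point set factors, $|\fix_{\Omega^t}(\vec g)|=\prod_{i=1}^t|\fix(g_i)|$, and the elementary cycle bound \eqref{e3}, applied to the $n={m\choose\ell}^t$ points of $\Omega^t$, gives $c(\vec g)\le\tfrac12\big(n+\prod_i|\fix(g_i)|\big)$. I would then split the sum exactly as in Proposition~\ref{prop:11}, but over tuples. If some coordinate has $\sigma(g_i)\le 3m/4$, then Lemma~\ref{lem:100} yields $\prod_i|\fix(g_i)|\le\tfrac34 n$, whence $c(\vec g)\le\tfrac78 n$, so these tuples contribute at most $k^{7n/8}$ after dividing by $|\AAA_m|^t$. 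If every coordinate has $\sigma(g_i)>3m/4$, then Lemma~\ref{lem:101} (summed over the at most $m/4$ admissible cycle numbers) bounds the number of possible $g_i$ by $\tfrac{m}{4}(m!)^{0.41}$, so the number of such tuples is at most $\big(\tfrac{m}{4}(m!)^{0.41}\big)^t$ and they contribute at most $\big(\tfrac{m}{2}(m!)^{-0.59}\big)^t k^n$. For all large $m$ this gives
\[
n((\AAA_m)^t,B_t)\ <\ k^{7n/8}+\Big(\tfrac{m}{2}(m!)^{-0.59}\Big)^t k^n,
\]
the $t$-fold analogue of Proposition~\ref{prop:11}.

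\emph{The inertia bound.} The decisive point is to estimate $k(I_H(\chi))$ through the imprimitive action of degree $mt$ rather than the product action of degree $n$: since $I_H(\chi)\le H\le\SSS_m\wr\SSS_t\hookrightarrow\SSS_{mt}$, each inertia group is a permutation group of degree $mt$, so $k(I_H(\chi))\le 5^{mt/3}$ by \cite{GM}. Using $5^{mt/3}$ in place of the useless $5^{n/3}$ (which is doubly exponential in $m$) is what makes the bound small enough. To stay free of the classification I would invoke only the Young-subgroup form of the bound in \cite{GM}, arranging that the relevant inertia groups reduce to (intersections with $\AAA_{mt}$ of) Young subgroups of $\SSS_{mt}$. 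Setting $e:=5^{mt/3}$, Lemma~\ref{l1} and the previous display give
\[
k(G)\ \le\ e\cdot n(H,B_t)\ \le\ 5^{mt/3}k^{7n/8}+\Big(5^{m/3}\cdot\tfrac{m}{2}(m!)^{-0.59}\Big)^t k^n.
\]

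\emph{Conclusion and the main obstacle.} Since $5^{m/3}\cdot\tfrac{m}{2}(m!)^{-0.59}\to 0$ as $m\to\infty$, the second term drops below $\tfrac12 k^n$ once $m$ is large; and $5^{mt/3}k^{7n/8}\le\tfrac12 k^n$ as soon as $k^{n/8}\ge 2\cdot 5^{mt/3}$, which holds for every sufficiently large $k$ or $n$ because $n={m\choose\ell}^t$ is at least quadratic in $m$ (as $(t,\ell)\neq(1,1)$) and exponential in $t$, while the exponent $mt$ is only linear. Together these give $k(G)<k^n$. For the finitely many bounded $m$ one argues directly: for $5\le m\le 8$ the set $\{\pi\in\AAA_m:\sigma(\pi)>3m/4\}$ is trivial, so the second term collapses to the tiny main term and only $5^{mt/3}k^{7n/8}$ must be absorbed, while for the remaining small $m$ a direct check of $5^{m/3}\cdot\tfrac{m}{2}(m!)^{-0.59}<1$ suffices. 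I expect the real difficulty to lie precisely in this balancing: the per-coordinate saving $(m!)^{-0.59}$ must outrun the inertia factor $5^{m/3}$ uniformly in both $m$ and $t$ — including the regime of bounded $m$ with $t\to\infty$ — and one must secure a sub-factorial, classification-free bound $k(I_H(\chi))\le c^{mt}$ rather than the naive $k(I_H(\chi))\le|I_H(\chi)|$, which is far too large.
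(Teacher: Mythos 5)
Your counting core---replacing $n(H,B_t)$ by $n((\AAA_m)^t,B_t)$ (legitimate, since $(\AAA_m)^t\le H$ makes every $H$-orbit a union of $(\AAA_m)^t$-orbits) and splitting tuples according to whether some coordinate satisfies $\sigma(g_i)\le 3m/4$---is sound for all sufficiently large $m$, and it is essentially the paper's route (Proposition~\ref{prop:13} combined with Lemmas~\ref{lem:15} and Proposition~\ref{prop:11}), with a nice shortcut: passing to the subgroup $(\AAA_m)^t$ avoids the paper's fixed-point-ratio argument for elements that permute the $t$ blocks. However, your "decisive point" is wrong: the inertia groups are \emph{not} Young subgroups, so your classification-free justification of $k(I_H(\chi))\le 5^{mt/3}$ collapses. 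The group $I_H(\chi)$ is the stabilizer in $H$ of a colouring of the ${m\choose\ell}^t$ points of $\Omega^t$, not of the $mt$ underlying letters. Already for $(t,\ell)=(1,2)$: taking $\chi=\bigotimes_{e}\chi_e$ with $\chi_e=\alpha$ on the five edges of a $5$-cycle in $K_5$ and $\chi_e=\beta\neq\alpha$ on the remaining edges, $I_{\SSS_5}(\chi)$ is the dihedral group of order $10$, which is neither a Young subgroup of $\SSS_5$ nor the intersection of one with $\AAA_5$ (no such intersection has order $10$). To stay classification-free you must use the Kov\'acs--Robinson bound $5^{mt-1}$ of \cite[Theorem 1.2]{KR}, as the paper does; for large $m$ this costs nothing, since $(m!)^{0.59}$ still outruns $5^{m}\cdot m/2$, so the exponent $mt$ versus $mt/3$ is a red herring there---but it does matter for small $m$, see below.

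Your treatment of bounded $m$ is the second genuine gap, and this regime cannot be skipped: "sufficiently large $n$" includes $m$ fixed and $t\to\infty$. Your main display rests on Lemmas~\ref{lem:100} and~\ref{lem:101}, both valid only for $m\ge N$ with $N$ an unspecified absolute constant, and your patches fail. For $5\le m\le 8$, even granting that only the identity of $\AAA_m$ has $\sigma(\pi)>3m/4$, the term $k^{7n/8}$ still requires Lemma~\ref{lem:100} for these $m$, which you have not verified; worse, with the legitimate factor $5^{mt}$ in place of $5^{mt/3}$, the identity tuple alone contributes $(2\cdot 5^m/m!)^t k^n$, which for $m=5$ is about $(52)^t k^n>k^n$. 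For the remaining $m<N$, checking $5^{m/3}\cdot\frac m2 (m!)^{-0.59}<1$ is beside the point, because the bound $\mathcal{S}(j,m)<(m!)^{0.41}$ of Lemma~\ref{lem:101} genuinely fails at moderate $m$: for instance $\mathcal{S}(7,9)=546$ while $(9!)^{0.41}\approx 190$. The paper escapes this regime by entirely different means, and you need them (or equivalents): when $t\to\infty$ it uses the trivial bound $n(\SSS_m,B_1)\le k^{{m\choose\ell}}$ together with Lemma~\ref{lem:15}, so that $5^{mt}2^{t}k^{t{m\choose\ell}}<\tfrac12 k^{{m\choose\ell}^t}$ because $n$ grows exponentially in $t$ while the exponents on the left grow only linearly; and when $m$ and $t$ are both bounded (so $n$ is bounded and only $k\to\infty$ remains), it uses that a primitive group other than $\SSS_n$ contains no transposition, so case (ii) of Lemma~\ref{eee111} yields $k(G)<k^n$ for all sufficiently large $k$.
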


Recall that $\Omega$ denotes the set of $\ell$-subsets of
$\{1,...,m\}$. For $\pi\in \SSS_{m}$, let $\sigma'(\pi)$ be the
number of its cycles as a permutation on $\Omega$.  For $x\in \SSS_m
\wr \SSS_t$, let $\gamma(x)$ be the number of its cycles as a
permutation on $\Omega^t$. (Of course, $\gamma(x)=\sigma'(x)$ when
$t=1$.) Recall also that $(\SSS_m)^t$ has a natural product action
on $B_t=\Irr(X)^{{m\choose\ell}^t}$ and $n((\SSS_m)^t,B_t)$ denotes
the number of its orbits.

In the next proposition we will need a general bound $f(n)$ for the
number of conjugacy classes of a permutation group of degree $n$
which does not rely on the classification of finite simple groups.
Kov\'acs and Robinson \cite[Theorem 1.2]{KR} proved that $f(n)$ may
be taken to be $5^{n-1}$.

\begin{prop}
\label{prop:13}
Assume the notation and hypothesis of Theorem~\ref{thm:10}. Then
\[
k(G)<5^{mt}\left(2^tn((\SSS_m)^t,B_t)+k^{2n/3}\right).
\]
\end{prop}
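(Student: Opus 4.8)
The plan is to combine Lemma~\ref{l1} with the Kov\'acs--Robinson class-number bound to reduce the estimate to the orbit count $n(H,B_t)$, which I would then analyze through Lemma~\ref{l2} by separating the elements of $H$ according to whether or not they permute the $t$ coordinate blocks.

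First I would apply Lemma~\ref{l1} to write $k(G)=\sum_{i=1}^{f}k(I_H(\chi_i))$, where $\chi_1,\dots,\chi_f$ are representatives of the orbits of $H$ on $B_t$ and $f=n(H,B_t)$. Each inertia group $I_H(\chi_i)$ is a subgroup of $H\le\SSS_m\wr\SSS_t$, and $\SSS_m\wr\SSS_t$ acts faithfully on the $mt$ points of its imprimitive action; hence $I_H(\chi_i)$ is a permutation group of degree at most $mt$, and the Kov\'acs--Robinson bound $f(mt)=5^{mt-1}$ yields $k(I_H(\chi_i))<5^{mt}$. Summing over the $f$ orbits gives $k(G)<5^{mt}\,n(H,B_t)$, so it suffices to prove $n(H,B_t)<2^t\,n((\SSS_m)^t,B_t)+k^{2n/3}$.

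To estimate $n(H,B_t)$ I would use Lemma~\ref{l2} in the form $n(H,B_t)=\frac{1}{|H|}\sum_{x\in H}k^{\gamma(x)}$ and split the sum via the projection $\rho\colon H\to\SSS_t$. Set $H_0:=\ker\rho=H\cap(\SSS_m)^t$, so that $(\AAA_m)^t\le H_0\le(\SSS_m)^t$. The contribution of $H_0$ is $\frac{|H_0|}{|H|}\,n(H_0,B_t)\le n(H_0,B_t)$; since $(\AAA_m)^t$ has index $2^t$ in $(\SSS_m)^t$, every $(\SSS_m)^t$-orbit splits into at most $2^t$ orbits of $(\AAA_m)^t\le H_0$, whence $n(H_0,B_t)\le n((\AAA_m)^t,B_t)\le 2^t\,n((\SSS_m)^t,B_t)$. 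This produces the first term.

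The crux is the block-permuting part, where $x\in H\setminus H_0$ has $\rho(x)=\tau\neq 1$. Writing $x=((h_1,\dots,h_t),\tau)$, a tuple fixed by $x$ is determined along each $\tau$-cycle of coordinates by a single entry subject to one consistency condition, so each cycle of $\tau$ contributes at most $|\Omega|=\binom{m}{\ell}$ choices; as $\tau\neq1$ has at most $t-1$ cycles, $|\fix(x,\Omega^t)|\le\binom{m}{\ell}^{\,t-1}=n/\binom{m}{\ell}\le n/5$, using $\binom{m}{\ell}\ge m\ge 5$. The elementary cycle estimate $\gamma(x)\le\frac12\bigl(n+|\fix(x,\Omega^t)|\bigr)$ then gives $\gamma(x)\le\frac12(n+n/5)=\tfrac{3n}{5}<\tfrac{2n}{3}$, so $k^{\gamma(x)}<k^{2n/3}$; since there are fewer than $|H|$ such $x$, their total contribution to $n(H,B_t)$ is less than $k^{2n/3}$. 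Adding the two parts gives the required bound on $n(H,B_t)$, and multiplying by $5^{mt}$ finishes the proof. I expect the fixed-point estimate for block-permuting elements to be the main obstacle: it is precisely these elements, lying outside $(\SSS_m)^t$, that cannot be absorbed into $n((\SSS_m)^t,B_t)$ and that force the separate error term $k^{2n/3}$, so the argument rests on showing their cycle count on $\Omega^t$ stays safely below $\tfrac{2n}{3}$.
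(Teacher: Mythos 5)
Your proposal is correct and follows essentially the same route as the paper: Lemma~\ref{l1} plus the Kov\'acs--Robinson bound $5^{mt-1}$ reduce the problem to bounding $n(H,B_t)$, which is then split via Lemma~\ref{l2} into the diagonal part $H\cap(\SSS_m)^t$ (bounded by $2^t n((\SSS_m)^t,B_t)$) and the block-permuting part (bounded by $k^{2n/3}$ through the fixed-point estimate $|\fix(x,\Omega^t)|\le|\Omega|^{t-1}$ and the cycle inequality $\gamma(x)\le\tfrac12(n+|\fix(x)|)$). The only cosmetic differences are that the paper bounds the diagonal sum directly by enlarging it to $(\SSS_m)^t$ and dividing by $|\AAA_m|^t$, where you use orbit-splitting under the index-$2^t$ subgroup $(\AAA_m)^t$, and your fixed-point ratio $1/5$ is slightly sharper than the paper's $1/3$; both yield the same conclusion.
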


\begin{proof}
Let $D:= (\SSS_m)^t\cap H$ be the `diagonal' subgroup of $H$. By
Lemma~\ref{l2}, the number of orbits of $H$ acting on $B_{t}
=\Irr(X^n)$ is
\begin{equation}
\label{e13}
n(H,B_{t})=\frac{1}{|H|}\sum_{x\in H}k^{\gamma(x)}=\frac{1}{|H|}\sum_{x\in D}k^{\gamma(x)}+\frac{1}{|H|}\sum_{x\in H\setminus D}k^{\gamma(x)}.
\end{equation}

For $x \in H$,  we write $\fix(x)$ to denote the set of elements in
$\Omega^t$ fixed by $x$.  By (\ref{e3}) we have
$$\gamma(x)\leq \frac{n}{2}(1+ \fpr(x)),$$
where $\fpr(x)=|\fix(x)|/n$ is the fixed point ratio of $x$. Let $x \in H\setminus D$ and let us write $x=(x_1,\ldots,x_t)\pi\in H$ for $x_1,\ldots, x_t\in \SSS_m$ and $1\neq \pi\in \SSS_t$. We know that $\pi$ must contain a cycle of length $r$ for some $r \geq 2$. A straightforward computation shows that $\fix(x)\leq |\Omega|^{t-(r-1)}$ (see also \cite[Proposition 6.1]{BG} for a similar argument) and thus $$\fpr(x)\leq |\Omega|^{1-r}\leq (r+1)^{-1} \leq 1/3,$$ where the second inequality holds since $|\Omega|\geq 5$.   Thus, the second term in the
far-right-hand-side sum in (\ref{e13}) is bounded by $k^{2n/3}$.

%According to \cite[Remark~1(c)]{BG}, we have $\fpr(x)\leq 1/3$ for every $x \in H\setminus D$. Thus, the second term in the far-right-hand-side sum in (\ref{e13}) is bounded by $k^{2n/3}$.

On the other hand, for the first term, we have
\[
\frac{1}{|H|}\sum_{x\in D}k^{\gamma(x)}\leq \frac{1}{|\AAA_m|^t}\sum_{x\in D}k^{\gamma(x)}\leq \frac{1}{|\AAA_m|^t}\sum_{x\in (\SSS_m)^t}k^{\gamma(x)}=2^tn((\SSS_m)^t,B_t).
\]
We have shown that
\[
n(H,B_{t})\leq 2^tn((\SSS_m)^t,B_t)+k^{2n/3}.
\]
Note that $H\leq  \SSS_m\wr \SSS_t$ and $\SSS_m\wr \SSS_t$ may be
viewed as a subgroup of $\SSS_{mt}$. It follows that every subgroup
of $H$ has at most $5^{mt}$ classes, by \cite[Theorem 1.2]{KR}. The
desired bound now follows by using Lemma~\ref{l1}.
\end{proof}

The next lemma relates the number of orbits of the product action of
$(\SSS_m)^t$ (on $B_t$) and that of $\SSS_m$ (on $B_1$). This allows
us to use the results in Section~\ref{sec:bounding-orbit} on
bounding $n(\SSS_m,B_1)$ to obtain similar bounds for
$n((\SSS_m)^t,B_t)$, which in turn provides corresponding bounds for
$k(G)$ by using Proposition~\ref{prop:13}.

\begin{lem}
\label{lem:15}
$n((\SSS_m)^t,B_t)=n(\SSS_m,B_1)^t$.
\end{lem}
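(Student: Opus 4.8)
The plan is to reduce both sides to exponential sums over group elements via the orbit-counting formula of Lemma~\ref{l2}, and then to factor the sum over the product group $(\SSS_m)^t$ coordinatewise. Viewing $B_t=\Irr(X^n)$ as an $(\SSS_m)^t$-set and applying Lemma~\ref{l2} to each side gives
\[
n((\SSS_m)^t, B_t) = \frac{1}{(m!)^t}\sum_{(\pi_1,\dots,\pi_t)\in (\SSS_m)^t} k^{\gamma(\pi_1,\dots,\pi_t)},
\qquad
n(\SSS_m, B_1)^t = \frac{1}{(m!)^t}\sum_{(\pi_1,\dots,\pi_t)} k^{\sum_{i=1}^t\sigma'(\pi_i)}.
\]
Thus the whole statement reduces to the single fact that the number of characters in $B_t$ fixed by $x=(\pi_1,\dots,\pi_t)$ equals $\prod_{i=1}^t k^{\sigma'(\pi_i)}=k^{\sum_i\sigma'(\pi_i)}$; once this is established, the sum over $(\SSS_m)^t$ splits as a product of $t$ identical single-variable sums and the claimed identity drops out.

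I would prove this factorization at the level of fixed characters rather than by a cycle computation, since that is cleaner and is exactly where the product-action structure is used. The key point is that the base group acts on $B_t$ as the $t$-fold \emph{direct product} of the $\SSS_m$-set $B_1$: the $i$-th factor $\SSS_m$ operates only through the $i$-th block, and distinct blocks are never interchanged (only the top group $\SSS_t$, which we have discarded by passing from $H$ to the base group $D\le(\SSS_m)^t$, could permute the blocks). Consequently a character of $B_t$, written according to this block decomposition, is fixed by $(\pi_1,\dots,\pi_t)$ if and only if each of its $t$ components is fixed by the corresponding $\pi_i$; hence the fixed-character count is the product of the blockwise counts, and by Lemma~\ref{l2} the $i$-th such count is precisely $k^{\sigma'(\pi_i)}$.

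Granting the factorization, the conclusion is immediate:
\[
n((\SSS_m)^t, B_t) = \prod_{i=1}^t\left(\frac{1}{m!}\sum_{\pi_i\in\SSS_m} k^{\sigma'(\pi_i)}\right) = n(\SSS_m, B_1)^t .
\]
The step I expect to require the most care is the structural one underlying the second paragraph, namely making precise that the $(\SSS_m)^t$-action on $B_t$ is genuinely the direct product of $t$ copies of the $\SSS_m$-action on $B_1$, so that the fixed-point counts multiply. This is the only place where the definition of the product action enters, and it is exactly the point at which one must take care not to conflate the coordinatewise action of the base group with any action that mixes the $t$ blocks. Once this direct-product structure is pinned down, the lemma is a formal consequence of the multiplicativity of orbit numbers under direct products, i.e. of Lemma~\ref{l2} applied blockwise.
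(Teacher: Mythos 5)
Your proposal is correct and follows essentially the same route as the paper's proof: both apply the orbit-counting lemma, use the fact that $(\SSS_m)^t$ acts on $B_t=(B_1)^t$ coordinatewise so that $(\pi_1,\ldots,\pi_t)$ fixes $(\chi_1,\ldots,\chi_t)$ if and only if each $\pi_i$ fixes $\chi_i$, and then factor the resulting sum into a $t$-fold product. The only cosmetic difference is that you write the fixed-point counts explicitly as $k^{\sigma'(\pi_i)}$ while the paper keeps them as $|\fix(\pi_i,B_1)|$; the argument is otherwise identical.
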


\begin{proof}
Observe that $B_t=(B_1)^t$ and an element
$x=(x_1,\ldots,x_t)\in(\SSS_m)^t$ fixes $(\chi_1,\ldots,\chi_t)\in
B_t$ if and only if each $x_i\in\SSS_m$ fixes $\chi_i\in B_1$ for
every $i$. Now,
\begin{align*}
n((\SSS_m)^t,B_t)&=\frac{1}{|\SSS_m|^t} \sum_{x\in(\SSS_m)^t} |\fix(x,B_t)|\\
&=\frac{1}{|\SSS_m|^t} \sum_{x_1\in\SSS_m}\cdots \sum_{x_t\in\SSS_m}|\fix(x_1,B_1)|\cdots |\fix(x_t,B_1)|\\
&=\frac{1}{|\SSS_m|^t}
\left(\sum_{x_1\in\SSS_m}|\fix(x_1,B_1)|\right)^t\\
&=n(\SSS_m,B_1)^t,
\end{align*}
and the lemma follows.
\end{proof}

%%%%%%%%%%%%%%

We are now ready to prove the main result of this section.

\begin{proof}[Proof of Theorem~\ref{thm:10}] By Proposition~\ref{prop:13} and Lemma~\ref{lem:15}, we have
\begin{equation}
\label{eq:30}
k(G)<5^{mt}\left(2^tn(\SSS_m,B_1)^t+k^{2n/3}\right).
\end{equation}
Obviously, $n(\SSS_m,B_1)\leq |B_1|=k^{{m\choose\ell}}$. Hence
\[
k(G)<5^{mt}2^tk^{{m\choose\ell}t} + 5^{mt}k^{2n/3}.
\]
It is straightforward to see that, as $(t,\ell)\neq (1,1)$, both
terms on the right-hand side are less than
$\frac{1}{2}k^{{m\choose\ell}^t}$ for every sufficiently large $t$.
We assume from now on that $t$ is bounded.

Next, using Proposition~\ref{prop:11} together with \eqref{eq:30},
we have that there exists a positive integer $N$ such that, for
every $m\geq N$,
\begin{align*}
k(G)&<5^{mt}4^t(\max\{k^{\frac{7}{8}{m\choose\ell}},(m!)^{-0.58}k^{{m\choose\ell}}\})^t+5^{mt}k^{2n/3}\\
&\leq 5^{mt}4^tk^{\frac{7t}{8}{m\choose\ell} t} + 5^{mt}4^t(m!)^{-0.58t}k^{{m\choose\ell} t}+5^{mt}k^{2n/3}.
\end{align*}
With $t$ being bounded, each of these three terms is less than
$\frac{1}{3}k^{{m\choose\ell}^t}$, and therefore $k(G)\leq k^n$, for
every sufficiently large $m$.

We now assume that both $t$ and $m$ are bounded, or equivalently,
that $n$ is bounded. Given the hypothesis that $H$ is a primitive
group that is different from $\SSS_n$, it follows that $H$ does not
contain a transposition. In this case, the remark following
\eqref{eee111} shows that $k(G)<k^n$ for all sufficiently large $k$.
This completes the proof.
\end{proof}

For primitive groups $H$, Theorem~\ref{t2} follows from Theorems
\ref{t1}, \ref{thm:first-exception} and \ref{thm:10}.

%%%%%%%%%%%%%%%%%%%%
%%%%%%%%%%%%%%%%%%%%

\section{Semiprimitive groups}
\label{sec:semi}

In this section we complete the proof of Theorem~\ref{t2} by proving
it for semiprimitive groups which are not primitive.

\begin{proof}[Proof of Theorem~\ref{t2}]
Let $H$ be a semiprimitive permutation group. This is a transitive
permutation group all of whose normal subgroups are transitive or
semiregular. We may assume at this point that $H$ is not primitive.
The group $H$ acts on the set $\Omega$ of factors of $X^n$. Let \[Y
:= X^{n/r}\] for some divisor $r\leq n/2$ of $n$ such that $H$ acts
primitively on the set $\overline{\Omega}$ of factors of $Y^r$. Let
the kernel of this action be $K$. Since this is an intransitive
normal subgroup of $H$, it must be semiregular on $\Omega$.

Let $h$ be an element of $H$. Let  the number of cycles of $h$
acting on $\Omega$ and $\overline{\Omega}$ be denoted by
$\sigma_{\Omega}(h)$ and $\sigma_{\overline{\Omega}}(h)$,
respectively. Observe that $\sigma_{\Omega}(h) \leq (n/r) \cdot
\sigma_{\overline{\Omega}}(h)$.

We have
\begin{align*}
\alpha(H) := \max_{1 \not= h \in H} \frac{\sigma_{\Omega}(h)}{n} &= \max \left\{ \max_{1 \not= h \in K} \frac{\sigma_{\Omega}(h)}{n}, \max_{h \in H \setminus K} \frac{\sigma_{\Omega}(h)}{n}\right\}\\
& \leq \max \left\{ \frac{1}{2}, \max_{h \in H \setminus K} \frac{\sigma_{\overline{\Omega}}(h)}{r}\right\}.
\end{align*}
It follows that if $r$ is bounded by an absolute constant, then
$\alpha(H)$ is a fixed number less than $1$ and so $k(G) \leq k^n$
for every sufficiently large $n$ or $k$ by (\ref{ee11}). We may
therefore assume that $r \to \infty$, in particular, $n \to \infty$.

If $H/K$ is not a large base group (see Definition~\ref{def}), then
$$|H| = |H/K| |K| \leq |H/K| \cdot n = \exp(O(n^{1/3}\log^{7/3}n))$$
by (\ref{f2}). In this case the result follows from Theorem
\ref{t3}.

For general $H/K$, we have $$n(H, \Irr(X^{n})) = \frac{1}{|H|}
\sum_{h \in H} k^{\sigma_{\Omega}(h)} = \frac{1}{|H|} \Big( \sum_{h
\in H \setminus K}  k^{\sigma_{\Omega}(h)} + \sum_{h \in K}
k^{\sigma_{\Omega}(h)}  \Big) \leq$$
$$\leq \frac{1}{|H|} \Big( |K| \sum_{1 \not= \bar{h} \in H/K} k^{\sigma_{\overline{\Omega}}(\bar{h}) \cdot (n/r)} + k^{n} + (|K|-1) k^{n/2}  \Big) <$$
$$< n\left(H/K, \Irr(Y^{r})\right) + \frac{k^{n}}{|H|} + \frac{n k^{n/2}}{|H|},$$
where $n(H/K, \Irr(Y^{r}))$ is the number of orbits of $R = H/K$ on
$\Irr(Y^{r}) = {\Irr(Y)}^{r}$ where the action of $R$ is defined from the
action of the primitive permutation group $R$ acting on the set of factors
of $Y^{r}$ (that is, $R$ acts primitively on the set of factors of
${\Irr(Y)}^{r}$).

The number $k(G)$ is equal to the sum of the numbers of conjugacy
classes of $n(H, \Irr(X^{n}))$ inertia subgroups, by Lemma \ref{l1}.
As before, let $e$ be the maximum of these numbers. Since $K$ is
semiregular, at most $\sum_{1 \not= h \in K} k^{\sigma_{\Omega}(h)}
< (n/r) k^{n/2}$ of the inertia subgroups intersect $K$
nontrivially. These numbers contribute less than $e (n/r) k^{n/2}
\leq (n/r)^{2} \cdot 5^{r/3} k^{n/2}$ to $k(G)$. (For $H/K \in \{
\SSS_r, \AAA_r \}$ this follows from  \cite{Nagao} and from the
statement in the second paragraph of Section 3, otherwise $|H/K|
\leq 5^{r/3}$ for every sufficiently large $r$ by \cite[Corollary
1.6]{SW}.) Since $r \leq n/2$, we have $(n/r)^{2} \cdot 5^{r/3}
k^{n/2} \leq n^{2} \cdot 5^{n/6} k^{n/2}$ and this is less than
$k^{n}/16$ for every sufficiently large $n$. Thus we have
\begin{align*}
k(G) &\leq e_K \cdot n\left(H, \Irr(X^{n})\right) + \frac{k^{n}}{16}\\
 &< e_K \cdot n\left(H/K, \Irr(Y^{r})\right) + \frac{e_K \cdot k^{n}}{|H|} + \frac{e_K \cdot n k^{n/2}}{|H|} + \frac{k^{n}}{16},
\end{align*}
where $e_K$ denotes the maximum of the numbers of conjugacy classes
of those inertia subgroups of $H$ which intersect with $K$
trivially. Note that $e_K$ is at most the maximum of the numbers of
classes of subgroups of $H/K$.

Recall that we are done when $H/K$ is not a large base group, and so
we assume in the remainder of the proof that $H/K$ is a large base
group. We shall follow the notation in Definition~\ref{def} and
Notation~\ref{notation}, with $Y$ and $r$ in place of $X$ and $n$,
respectively. In particular, $(\AAA_m)^t\leq H/K\leq \SSS_m \wr
\SSS_t$ for some $t\geq 1$ and $m\geq 5$. Also,
$r={m\choose\ell}^t$.

Since $H$ is not abelian, we have $e_K \leq (5/8) |H|$ by
\cite{Gustafson}. It follows that $$\frac{e_K \cdot k^{n}}{|H|} +
\frac{e_K \cdot n k^{n/2}}{|H|} + \frac{k^{n}}{16} \leq \frac{3}{4}
k^{n}$$ for every sufficiently large $n$. Note that $H/K$ can be
viewed as a subgroup of $\SSS_{mt}$, and so $e_K \leq 5^{mt}$ by
\cite[Theorem 1.2]{KR}. To establish $k(G)\leq k^n$ for sufficiently
large $n$ or $k$, it is now sufficient to show that
\begin{equation}
\label{eq:1111}
n(H/K, \Irr\left(Y^{r})\right) \leq \frac{k^{n}}{4\cdot 5^{mt}}
\end{equation}
for every sufficiently large $r$, or equivalently, every
sufficiently large $m$ or $t$.

Let $(t,\ell) = (1,1)$. In this case we may replace the above bound
$e_K \leq 5^{mt}$ by $e_K \leq 5^{mt/3}$ as discussed above. We have
$$n(H/K, \Irr\left(Y^{r})\right) \leq 2 \cdot \binom{r + k^{n/r} - 1}{r} \leq 2 \cdot 3^{r} \Big(\frac{r + k^{n/r} -1}{r}\Big)^{r} < 2 \cdot 3^{r} \Big(\frac{k^{n/r}}{r} + 1 \Big)^{r}.$$ If $k^{n/r}/r \geq 1$, then $$n(H/K, \Irr\left(Y^{r})\right) \leq 2 \cdot 6^{r} \Big( \frac{k^{n}}{r^r} \Big) < k^{n}/(4\cdot 5^{mt/3})$$ for every sufficiently large $r = m$. Let $k^{n/r} \leq r$. Then $n(H/K, \Irr\left(Y^{r})\right) \leq 4^{r}$. This is less than $k^{n}/(4\cdot 5^{r/3})$ for every sufficiently large $r$, unless $n = 2r$ and $k=2$. In the exceptional case $n(H/K, \Irr\left(Y^{r})\right) \leq (r+3)(r+2)(r+1)/3$, which is again less than $k^{n}/(4\cdot 5^{r/3})$ for every sufficiently large $r$. Let $(t,\ell) \not= (1,1)$.

First, arguing as in the proof of Proposition~\ref{prop:13} and
using Lemma~\ref{lem:15}, we have
\[
n(H/K, \Irr\left(Y^{r})\right)\leq 2^tn\left((\SSS_m)^t,B_t\right)+k(Y)^{2r/3}=2^tn\left(\SSS_m,B_1\right)^t+k(Y)^{2r/3}.
\]
When $t\to\infty$, one may use the obvious bound
$n\left(\SSS_m,B_1\right)\leq |B_1|=k(Y)^{m\choose\ell}$ to achieve
\eqref{eq:1111}. So we assume that $t$ is bounded.

Next, using Proposition~\ref{prop:11}, we deduce that
\begin{align*}
n(H/K, \Irr\left(Y^{r})\right)
\leq 4^t k(Y)^{\frac{7}{8}{m\choose\ell}t} + 4^t(m!)^{-0.58t} k(Y)^{{m\choose\ell}t} + k(Y)^{2r/3}.
\end{align*}
for every sufficiently large $m$.
As $k(Y)=k^{n/r}$, it follows that
\[
n(H/K, \Irr\left(Y^{r})\right) \leq 4^t k^{\frac{7}{8r}{m\choose\ell}tn} + 4^t(m!)^{-0.58t} k^{\frac{1}{r}{m\choose\ell}tn} + k^{2n/3}.
\]
With $n\geq 2r=2{m\choose\ell}^t$, $m\to\infty$, and $t$ being
bounded, it is straightforward to verify that this sum is less than
the right-hand side of \eqref{eq:1111}, and the proof is complete.
\end{proof}

\bigskip

\centerline{\bf Acknowledgement}

\bigskip

This work was carried out while the first and third authors were
visiting Budapest. They gratefully acknowledge the hospitality of
the Alfr\'ed R\'enyi Institute of Mathematics.

\end{document}